\newtheorem{theorem}[equation]{Theorem}
\newtheorem{corollary}[equation]{Corollary}
\newtheorem{lemma}[equation]{Lemma}
\newtheorem{proposition}[equation]{Proposition}
\newtheorem{definition}[equation]{Definition}
\newtheorem{remark}[equation]{Remark}
\numberwithin{equation}{section}
\newcommand{\ol}{\overline}
\newcommand{\Z}{\mathbb{Z}}
\newcommand{\KK}{\mathbb{K}}
\newcommand{\LL}{\mathbb{L}}
\newcommand{\fg}{{\mathfrak g}}
\newcommand{\fb}{{\mathfrak b}}
\newcommand{\fh}{{\mathfrak h}}
\newcommand{\qed}{\hfill $\Box$}
\newcommand{\bmu}{{\mathbf \mu}}
\newcommand{\bB}{{\bf B}}
\newcommand{\bT}{{\bf T}}
\newcommand{\bH}{{\bf H}}
\newcommand{\bG}{{\bf G}}
\newcommand{\bC}{{\bf C}}
\newcommand{\Ad}{{\rm Ad}}
\newcommand{\Gal}{{\mathcal Gal}}
\newcommand\limind{\mathop{\oalign{lim\cr
\hidewidth$\longrightarrow$\hidewidth\cr}}}
\newcommand\bOut{\text{\bf Out}}
\newcommand{\bSL}{{\mathbf{SL}}}
\newcommand{\bGL}{{\mathbf{GL}}}
\newcommand{\bPGL}{{\mathbf{PGL}}}
\newcommand{\bAut}{{\mathbf{Aut}}}
\begin{document}
\title{Belavin--Drinfeld quantum groups and Lie bialgebras: Galois cohomology considerations}
\author{Arturo Pianzola$^{1,2}$ and Alexander Stolin$^{3}$}
\maketitle

$^1$ Department of Mathematical \& Statistical Science, University of Alberta, Edmonton, Alberta, T6G 2G1, Canada.

$^2$ Centro de Altos Estudios en Ciencias Exactas, Avenida de Mayo 866, (1084), Buenos Aires, Argentina.

$^3$ Department of Mathematical Sciences, Chalmers University of Technology and the University of Gothenburg, 412 96 Gothenburg, Sweden.

\begin{abstract} We relate the Belavin--Drinfeld cohomologies (twisted and untwisted) 
that have been introduced in the literature to study certain families of quantum groups and Lie bialgebras
over a non algebraically closed field $\KK$ of characteristic 0 to the standard 
non-abelian Galois cohomology $H^1(\KK, \bH)$ for a suitable algebraic $\KK$-group $\bH.$ 
The approach presented allows us to establish in full generality certain conjectures that were known to hold for the classical types of the
split simple Lie algebras. 
 \noindent \\
{\em Keywords:} Belavin--Drinfeld, Quantum group, Galois cohomology \\
{\em MSC 2000} Primary 17B37, 17B62 and 17B67. Secondary 17B01.
\end{abstract}

\vskip.25truein \section{Introduction}
The appearance of Galois cohomology in the classification of certain quantum groups is one of the primary goals of this paper. In order to do 
this we first need to ``linearize" quantum groups (in the same spirit that, via the exponential map, complex simply connected simple 
Lie groups can be studied/classified by looking at their Lie algebras). The linearization problem is an extremely technical 
construction brought forward as a conjecture in the work of Drinfeld (\cite{DUN}), and proved in the seminal work of  Etingof and  Kazhdan (see \cite{EK1} and \cite{EK2}). An outline of this correspondence can be found in the Introductions of [KPPS1,3], wherein one can also find an explanation of why the description of which Lie bialgebras structures exists on the Lie algebra $\fg \otimes_k k((t)),$ with $\fg$ simple finite dimensional over an algebraically closed field $k$ of characteristic $0,$ arise naturally in the classification of quantum groups. The approach to the classification of Lie bialgebra structures  on $\fg \otimes_k k((t)) $ developed in [KKPS1,2,3] is by the introduction of the so-called ``Belavin--Drinfeld cohomologies". The calculation of these cohomologies is mostly done on a case-by-case basis in the classical types using  realizations of the relevant objects as matrices. The main thrust of the present paper is to realize Belavin--Drinfeld cohomologies as usual Galois cohomologies. This allows for uniform realization-free proofs in all types of results that were conjectured (and were known to hold on many of the classical types). The methods that we describe also open an avenue for further studies of Lie bialgebra structures over non-algebraically closed fields.
\medskip

\noindent {\bf Acknowledgement}. We would like to thank Seidon Alsaody for his careful reading of our manuscript.

\section{Notation}

Throughout this paper $\mathbb{K}$ will denote a field of characteristic $0$.  We fix an  algebraic closure of $\KK$ which will be denoted by $\overline{\KK}.$ The (absolute) Galois  group of the extension $\overline{\KK}/\KK$ will be denoted by $\mathcal{G}.$\footnote{For the ``untwisted" Belavin--Drinfeld cohomologies $\KK$ will be arbitrary. In the ``twisted" case $\KK = k((t))$ where $k$ is algebraically closed.}

If $V$ is a $\KK$-space (resp. Lie algebra), we will denote the $\overline{\KK}$-space (resp. Lie algebra) $V \otimes_\KK \overline{\KK}$ by $\overline{V}.$

If $\text{\bf K}$ is a linear algebraic group over $\KK$ the corresponding (non-abelian) Galois cohomology will be denoted by $H^1(\KK, \text{\bf K}).$ (See \cite{Se} for details. See also \cite{DG}, \cite{M} and \cite{SGA1} for some of the more technical aspects of this theory that will be used in what follows without further reference). We recall that $H^1(\KK, \text{\bf K}) $ coincides with the usual non-abelian continuous cohomology of  the profinite group $\mathcal{G}$ acting (naturally)  on $\text{\bf K}(\overline{\KK}).$

Let  $\fg$ be a split finite dimensional simple Lie algebra over $\mathbb{K}.$ In what follows $\bG$ will denote a split (connected) reductive algebraic group over $\KK$ with the property that the Lie algebra of the corresponding adjoint group $\bG_{\rm ad}$ is isomorphic to $\fg.$\footnote{The case which is most of interest to us is when $\bG = \bG_{\rm ad}.$ That said, peculiar phenomena appear when $\bG$ is either $\bGL_n$ or $\bSL_n$. Of course $\bG_{\rm ad}$ is then $\bPGL_n$ and $\fg = \mathfrak{sl}_n.$ The case of $\bG  = \text{\rm \bf SO}_{2n}$ is also interesting. For all of these reasons we try to maintain our  set up as general as possible.}

We fix once and for all a Killing couple $(\bB, \bH)$ of $\bG$. The induced Killing couple on $\bG_{\rm ad}$, which we denote by $(\bB_{\rm ad}, \bH_{\rm ad})$,  leads to a Borel subalgebra and split Cartan subalgebras of $\fg$ which will be denoted by $\fb$ and $\fh$ respectively. Our fixed Killing couple leads, both at the level of $\bG_{\rm ad}$ and $\fg,$ to a root system $\Delta$ with a fixed 
set of positive roots $\Delta_+$ and base $\Gamma = \{ \alpha_1, \cdots, \alpha_n \}.$\footnote{The elements of $\Delta$ are to be thought as characters of $\bH_{\rm ad}$ or elements of $\fh^*$ depending on whether we are working at the group or Lie algebra level. This will always be clear from the context.}

The Lie bialgebra structures that we will be dealing with are defined by $r$-matrices, which are  element of $\fg \otimes_\KK \fg$ satisfying  ${\rm CYB}(r) = 0$ where CYB is the classical Yang-Baxter equation (see \S3 below and \cite{ES} for definitions). For future use we introduce some terminology and notation. Consider the  action of $\bG$ on $\fg \otimes_\KK \fg$ induced from the adjoint action of $\bG$ on $\fg.$  Let $R$ be a commutative ring extension of $\KK$. If  $X \in \bG(R)$  and $ v \in (\fg \otimes_\KK\fg)_a(R) = (\fg \otimes_\KK \fg)\otimes_\KK  R \simeq (\fg \otimes_\KK R) \otimes_R (\fg \otimes_\KK R)$, then the adjoint action of $X$ in $v$ will be denoted by $\Ad_X(v).$\footnote{In contrast to the notation $(\Ad_X \otimes \Ad_X)(v)$ used elsewhere.}

Along similar lines if $\sigma \in \cal{G}$ we will write $\sigma(r)$ instead of $(\sigma \otimes \sigma)(r).$

\section{ The Belavin--Drinfeld classification. }

We maintain all of the above notation. Consider a Lie bialgebra structure $(\fg, \delta)$  on $\fg.$ By Whitehead's Lemma the cocycle $\delta : \fg \to \fg \otimes_\KK \fg$  is a coboundary. 
Thus $ \delta = \delta_r$ for some element $r \in \fg \otimes_\KK \fg$, namely
$$
\delta (a)=[a\otimes 1+1\otimes a, r]
$$
for all $a \in \fg.$ It is well-known when an element $ r \in \fg \otimes_\KK \fg$ determines a Lie bialgebra structure of $\fg. $ See \cite{ES} for details.

Assume until further notice that $\KK$ is algebraically closed. We then have the Belavin--Drinfeld classification \cite{BD}, 
which we now recall. Define an equivalence relation on $\fg \otimes_\KK \fg$ by declaring that $r$ is equivalent to $r'$ if there exist an element $X \in \bG_{\rm ad}(\KK)$ and a scalar $c \in \KK^\times$ such that
\begin{equation}\label{equivalent}
r' = c \, \Ad_X(r)
\end{equation}
If furthermore $c = 1$ the two elements are called gauge equivalent. 

Belavin--Drinfeld  provides us with a list of elements $r_{\rm BD} \in \fg \otimes_\KK \fg$ (called Beladin-Drinfeld r-matrices) with the following properties:
\begin{enumerate}
\item Each $r_{\rm BD}$ is an $r$-matrix (i.e. a solution of the classical Yang-Baxter equation) satisfying  $r + r^{21} = \Omega$ (where  $\Omega$ is the Casimir operator  of $\fg.$)

\item Any non-skewsymetric $r$-matrix for $\fg$ is equivalent to a unique $r_{\rm BD}.$

\end{enumerate}

For the reader's convenience we recall the nature of the Belavin--Drinfeld $r$-matrices. 
With respect to our fixed  $(\fb, \fh)$, any $r_{\rm BD}$ depends on a discrete and a continuous parameter. 
The discrete parameter is an admissible triple $(\Gamma_{1},\Gamma_{2},\tau)$, i.e.
an isometry $\tau:\Gamma_{1}\longrightarrow \Gamma_{2}$ where $\Gamma_{1},\Gamma_{2}\subset\Gamma$ such that
for any $\alpha\in\Gamma_{1}$ there exists $k\in \mathbb{N}$ satisfying
$\tau^{k}(\alpha)\notin \Gamma_{1}$. The continuous parameter is a tensor $r_{0}\in \mathfrak{h} \otimes_\KK\mathfrak{h}$ satisfying $r_{0}+r_{0}^{21}=\Omega_{0}$
and $(\tau(\alpha)\otimes 1+1 \otimes \alpha)(r_{0})=0$ for any $\alpha\in \Gamma_{1}$. 
Here $\Omega_{0}$ denotes the Cartan part of the quadratic Casimir element $\Omega$.
Then \[r_{\rm BD} =r_{0}+\sum_{\alpha>0}e_{\alpha}\otimes e_{-\alpha}+\sum_{\alpha\in (Span \Gamma_{1})^{+} }\sum_{k\in \mathbb{N}} e_{\alpha}\wedge e_{-\tau^{k}(\alpha)}.\]

We now return to the case of our general $\KK.$ Let $(\fg, \delta)$ be a 
Lie bialgebra structure on $\fg$. We will assume that $(\fg, \delta)$ is 
not triangular, i.e. $\delta = \delta_r$ where $r \in \fg \otimes_\KK \fg$  is not skew-symmetric. 
We view $r$ as an element of $\overline{\fg} \otimes_{\overline{\KK}} \overline{\fg}$ in the natural way and denote it 
by $\overline{r}.$ The $\overline{\KK}$ Lie bialgebra  $(\overline{\fg}, \overline{\delta})$ obtained 
by base change is given by the $r$-matrix $\overline{r}.$ By the Belavin--Drinfeld classification there exists a unique  $r_{\rm BD}$ such that  
\begin{equation}\label{crational}
\overline{r} = c \, \mathrm{Ad}_{X}(r_{\rm BD})
\end{equation}
 for some 
 $X\in \bG(\overline{\KK})$  and $c \in \overline{\KK}^\times.$ Since $\overline{r} + \overline{r}^{21} = c\,\Omega$ we can apply \cite{KKPS3} Theorem 2.7 to conclude that $c^2 \in \KK.$ 
 
 This leads to two cases, according to whether $c$ is in $\KK$ or not. The first case is treated with the untwisted Belavin--Drinfeld cohomologies, while the second one, in the case when $\KK = k((t))$ with $k$ algebraically closed of characteristic $0$, leads to twisted Belavin--Drinfeld cohomologies. These and their relations to Galois cohomology are the contents of the next two sections.
 
 \section{Untwisted Belavin--Drinfeld cohomology}

 Assume that in (\ref{crational}) we have $c \in \KK^\times.$ Let $s = c^{-1}\overline{r}$. By (\ref{crational}) 
$r_{\rm BD} = \Ad_{X^{-1}} s.$  For any element $\gamma \in \mathcal{G} = \text{\rm Gal}(\overline{\mathbb{K}}/\mathbb{K})$ 
we have $\gamma(s) = s$ and therefore $s = \Ad_{\gamma(X)}\gamma(r_{\rm BD}). $ From the foregoing it follows that
 \begin{equation}\label{keyeqinK}
 r_{\rm BD} = \Ad_{X^{-1}\gamma(X)}\gamma(r_{\rm BD})
 \end{equation}
 We can now appeal to Theorem 3 of  \cite{KKPS1} to conclude that.

\begin{theorem}\label{CartanF} Assume that $\overline{r} = c \, \mathrm{Ad}_{X}(r_{\rm BD})$ are as above.
Then $r_{\rm BD}$ is rational, i.e. it belongs to $\fg \otimes_\KK \fg.$ Furthermore $X^{-1}\gamma(X)\in \bC(\bG, r_\text{\rm BD})(\overline{\KK})$ for all $\gamma \in \mathcal{G}.$ \qed
\end{theorem}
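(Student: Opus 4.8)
The plan is to extract the two assertions from equation (\ref{keyeqinK}) by a descent argument. Write $c_\gamma := X^{-1}\gamma(X) \in \bG(\overline{\KK})$; since $X \in \bG(\overline{\KK})$ and the $\mathcal{G}$-action on $\bG(\overline{\KK})$ is continuous, $\gamma \mapsto c_\gamma$ is a continuous $1$-cocycle in the usual sense (it satisfies $c_{\gamma\gamma'} = c_\gamma \cdot \gamma(c_{\gamma'})$). Equation (\ref{keyeqinK}) says precisely that $\Ad_{c_\gamma}\gamma(r_{\rm BD}) = r_{\rm BD}$ for every $\gamma$. The first thing I would do is invoke Theorem 3 of \cite{KKPS1}, which governs exactly this configuration: a Belavin--Drinfeld $r$-matrix that becomes, after twisting by an element of $\bG(\overline{\KK})$, Galois-invariant is forced to be rational. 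The substance of that theorem is that the admissible triple $(\Gamma_1,\Gamma_2,\tau)$ and the continuous Cartan parameter $r_0$ are pinned down by the equivalence class of $\overline r$ over $\overline\KK$, and that $\mathcal{G}$ permutes the finite list of Belavin--Drinfeld $r$-matrices compatibly with its action on the base $\Gamma$; because $r_{\rm BD}$ sits in a single Galois orbit and is simultaneously conjugate (via $c_\gamma$, hence via an \emph{inner} automorphism) to each of its Galois conjugates, the discrete datum must be $\mathcal{G}$-fixed, and then a short argument on the Cartan part $r_0$ upgrades this to $\gamma(r_{\rm BD}) = r_{\rm BD}$ for all $\gamma$. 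This is the step I expect to be the main obstacle, and it is essentially the content one imports wholesale from \cite{KKPS1}; everything else is formal.

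Granting $\gamma(r_{\rm BD}) = r_{\rm BD}$ for all $\gamma \in \mathcal{G}$, the first conclusion is immediate: an element of $\overline{\fg}\otimes_{\overline\KK}\overline{\fg}$ fixed by the full Galois group lies in the $\KK$-form, i.e. $r_{\rm BD} \in \fg\otimes_\KK\fg$, by Galois descent for the $\overline\KK$-vector space $\overline{\fg}\otimes_{\overline\KK}\overline{\fg} = (\fg\otimes_\KK\fg)\otimes_\KK\overline\KK$. For the second conclusion, substitute $\gamma(r_{\rm BD}) = r_{\rm BD}$ back into (\ref{keyeqinK}) to get $\Ad_{c_\gamma}(r_{\rm BD}) = r_{\rm BD}$; that is, $c_\gamma = X^{-1}\gamma(X)$ stabilizes $r_{\rm BD}$ under the adjoint action of $\bG$ on $\fg\otimes_\KK\fg$. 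By the very definition of $\bC(\bG, r_{\rm BD})$ as the stabilizer subgroup scheme of $r_{\rm BD}$ in $\bG$, this says $X^{-1}\gamma(X) \in \bC(\bG, r_{\rm BD})(\overline\KK)$ for all $\gamma \in \mathcal{G}$, which is what was claimed. (One should check that $\bC(\bG,r_{\rm BD})$ is genuinely a closed subgroup scheme of $\bG$ — clear, since the stabilizer of a vector under an algebraic action is closed — but no more than that is needed here.)

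In summary, the proof is: rationality of $r_{\rm BD}$ and Galois-invariance of $r_{\rm BD}$ come together out of \cite{KKPS1} Theorem 3 applied to (\ref{keyeqinK}); descent then gives $r_{\rm BD}\in\fg\otimes_\KK\fg$; and re-feeding the invariance into (\ref{keyeqinK}) identifies the cocycle $\gamma\mapsto X^{-1}\gamma(X)$ as taking values in the stabilizer $\bC(\bG,r_{\rm BD})(\overline\KK)$. The only delicate input is the cited theorem, whose role is to rule out the a priori possibility that the Galois group moves $r_{\rm BD}$ to a \emph{different} Belavin--Drinfeld $r$-matrix in its (inner) equivalence class while keeping $\overline r$ fixed; the uniqueness clause in the Belavin--Drinfeld classification (property (2) above) is what makes this work, since it forces the target of any such Galois conjugation to be $r_{\rm BD}$ itself.
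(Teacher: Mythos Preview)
Your proposal is correct and follows the same route as the paper: derive (\ref{keyeqinK}), invoke Theorem~3 of \cite{KKPS1} to get $\gamma(r_{\rm BD})=r_{\rm BD}$, then read off rationality by descent and the centralizer condition by substitution. The paper in fact presents this theorem as an immediate corollary of that citation, so your write-up simply supplies the unpacking that the paper leaves implicit.
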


We now recall (with our notation) the Belavin--Drinfeld cohomology definitions and 
results developed in \cite{KKPS1}.  Let $r_{\rm BD} \in \fg \otimes_\KK \fg$ be a Belavin--Drinfeld $r$-matrix.
 
\begin{definition}
  An element $X\in \bG (\overline{\mathbb{\KK}})$ is called a \emph{Belavin--Drinfeld cocycle}
associated to $\bG$ and $r_{\rm BD}$ if $X^{-1}\gamma(X)\in  \bC(\bG, r_{\rm BD})(\overline{\KK}) $, for any $\gamma \in \cal{G}.$

\end{definition}
The set of Belavin--Drinfeld cocycles associated to $r_{\rm BD}$ will be denoted by
$Z_{BD}(\bG,r_{\rm BD})$. Note that this set contains the identity element of $\bG(\overline{\KK})$.

\begin{definition}
 Two cocycles $X_1$ and $X_{2}$ in $Z_{BD}(\bG,r_{\rm BD})$ are called \emph{equivalent} if
there exists $Q\in \bG(\KK)$ and $C\in \bC(\bG, r_{\rm BD})(\overline{\KK})$ such that $X_{1}=QX_{2}C$.
\end{definition}
It is easy to check that the above defines an equivalence relation in the non-empty set $Z_{BD}(\bG,r_{\rm BD})$

\begin{definition}
Let $H_{BD}^{1}(\bG,r_{\rm BD})$ denote the set of equivalence classes of cocycles 
in $Z_{BD}(\bG ,r_{\rm BD})$. 

We call this set the \emph{Belavin--Drinfeld cohomology} associated to $(\bG, r_{\rm BD}).$
The Belavin--Drinfeld cohomology is said to be \emph{trivial} if all cocycles are equivalent to the identity, and
\emph{non-trivial} otherwise.
\end{definition}

\begin{remark} {\rm The relevance of this concept, as explained in \cite{KKPS1}, is that
there exists a one-to-one correspondence between $H^{1}_{BD}(\bG,r_{\rm BD})$
and  Lie bialgebra structures $(\fg, \delta)$ on 
$\fg$ with classical double isomorphic to 
$\fg \oplus \fg$ and  $\overline{\delta}=\delta_{r_{\rm BD}}$ up to equivalence.}
\end{remark}

Our next goal is  to realize $H_{BD}^{1}(\bG,r_{\rm BD})$ in terms of usual Galois cohomology. 
This will allow us to establish some open conjectures, as well as ``interpret" some peculiarities observed with $H_{BD}^{1}(\bG,r_\text{\rm BD})$ for certain special orthogonal groups.

\begin{proposition}\label{mainuntwistedprop}
There is a natural injection of pointed sets
$$
H_{BD}^1 (\bG,r_{BD})\to H^1 (\mathbb{K},\bC(\bG, r_{\rm BD}))
$$
\end{proposition}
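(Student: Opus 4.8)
The plan is to construct the map explicitly and then check it is well defined and injective. Given a Belavin--Drinfeld cocycle $X \in Z_{BD}(\bG, r_{\rm BD})$, Theorem~\ref{CartanF} tells us that the assignment $\gamma \mapsto u_\gamma := X^{-1}\gamma(X)$ takes values in $\bC(\bG, r_{\rm BD})(\overline{\KK})$. First I would verify that $(u_\gamma)_{\gamma \in \cal{G}}$ is a $1$-cocycle for $\cal{G}$ acting on $\bC(\bG, r_{\rm BD})(\overline{\KK})$: the identity $u_{\gamma\delta} = u_\gamma \cdot \gamma(u_\delta)$ follows formally from $u_\gamma = X^{-1}\gamma(X)$ exactly as in the classical proof that a ``twisted coboundary'' is a cocycle, and continuity is automatic since $X$ has coordinates in a finite extension of $\KK$. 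This gives a map $Z_{BD}(\bG, r_{\rm BD}) \to Z^1(\cal{G}, \bC(\bG, r_{\rm BD})(\overline{\KK}))$, $X \mapsto (X^{-1}\gamma(X))_\gamma$, and composing with the natural projection to $H^1(\KK, \bC(\bG, r_{\rm BD}))$ produces a map $Z_{BD}(\bG, r_{\rm BD}) \to H^1(\KK, \bC(\bG, r_{\rm BD}))$.

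Next I would show this map factors through the Belavin--Drinfeld equivalence relation. Suppose $X_1 = Q X_2 C$ with $Q \in \bG(\KK)$ and $C \in \bC(\bG, r_{\rm BD})(\overline{\KK})$. Then $X_1^{-1}\gamma(X_1) = C^{-1} X_2^{-1} Q^{-1} \gamma(Q)\gamma(X_2)\gamma(C) = C^{-1}\bigl(X_2^{-1}\gamma(X_2)\bigr)\gamma(C)$, using $\gamma(Q) = Q$ since $Q$ is $\KK$-rational. This is precisely the statement that the cocycle attached to $X_1$ is cohomologous to the one attached to $X_2$ (the coboundary being effected by $C \in \bC(\bG, r_{\rm BD})(\overline{\KK})$), so the two have the same image in $H^1(\KK, \bC(\bG, r_{\rm BD}))$. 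Hence we obtain a well-defined map of pointed sets $H^1_{BD}(\bG, r_{\rm BD}) \to H^1(\KK, \bC(\bG, r_{\rm BD}))$; it sends the class of the identity cocycle to the trivial class, so it is a morphism of pointed sets.

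For injectivity, suppose $X_1, X_2 \in Z_{BD}(\bG, r_{\rm BD})$ give cohomologous cocycles in $\bC := \bC(\bG, r_{\rm BD})$, i.e. there is $C \in \bC(\overline{\KK})$ with $X_1^{-1}\gamma(X_1) = C^{-1}\bigl(X_2^{-1}\gamma(X_2)\bigr)\gamma(C)$ for all $\gamma$. Rearranging, $\gamma\bigl(X_1 C^{-1}\bigr) \cdot \bigl(X_1 C^{-1}\bigr)^{-1}$ equals $\gamma(X_2) X_2^{-1}$ — more carefully, I would show that the element $Q := X_2 C X_1^{-1}$ (or the appropriate rearrangement) satisfies $\gamma(Q) = Q$ for all $\gamma \in \cal{G}$, hence $Q \in \bG(\KK)$ by Galois descent for the affine $\KK$-scheme $\bG$. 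Then $X_2 = Q X_1 C^{-1}$ with $Q \in \bG(\KK)$ and $C^{-1} \in \bC(\overline{\KK})$ exhibits $X_1$ and $X_2$ as equivalent Belavin--Drinfeld cocycles, giving injectivity.

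The main obstacle, and the step deserving the most care, is the bookkeeping around which side the centralizer acts on and whether $\bC(\bG, r_{\rm BD})$ is $\cal{G}$-stable as a subgroup of $\bG(\overline{\KK})$ — that is, whether $\gamma(\bC(\bG, r_{\rm BD})(\overline{\KK})) = \bC(\bG, r_{\rm BD})(\overline{\KK})$, which is needed for the target $H^1(\KK, \bC(\bG, r_{\rm BD}))$ to even make sense as a Galois cohomology set. This holds because $r_{\rm BD}$ is rational (Theorem~\ref{CartanF}), so $\bC(\bG, r_{\rm BD})$ is defined over $\KK$ as the scheme-theoretic stabilizer of a $\KK$-point of $\fg \otimes_\KK \fg$; I would state this explicitly. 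Once that is in place, all the cocycle and coboundary manipulations above are formal consequences of $\KK$-rationality of $Q$ and the descent property $\bG(\overline{\KK})^{\cal{G}} = \bG(\KK)$, and there is nothing deep remaining.
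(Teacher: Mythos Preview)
Your proposal is correct and follows essentially the same route as the paper's proof: define $u_X(\gamma)=X^{-1}\gamma(X)$, check the cocycle identity, show that Belavin--Drinfeld equivalence $Y=QXC$ translates into $u_Y=C^{-1}u_X\,\gamma(C)$, and for injectivity observe that the element $XCY^{-1}$ (your $Q^{-1}$, up to relabelling) is $\mathcal{G}$-fixed and hence lies in $\bG(\KK)$. One small remark: your appeal to Theorem~\ref{CartanF} for the fact that $X^{-1}\gamma(X)\in\bC(\bG,r_{\rm BD})(\overline{\KK})$ is unnecessary, since this is the \emph{definition} of a Belavin--Drinfeld cocycle; the place where Theorem~\ref{CartanF} genuinely enters is the rationality of $r_{\rm BD}$, which (as you correctly note later) is what guarantees that $\bC(\bG,r_{\rm BD})$ is a $\KK$-group and hence that the target $H^1(\KK,\bC(\bG,r_{\rm BD}))$ makes sense.
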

\begin{proof}
 Let $X \in \bG(\KK)$ be a Belavin--Drinfeld cocycle. For $\gamma\in \cal{G}$ define 
$$u_X : \cal{G} \to \bG(\KK)$$
by 
$$
u_X:  \gamma \to u_X(\gamma) :=X^{-1} \gamma (X).
$$
Clearly $u_X$ satisfies the cocycle condition (it is in fact a cohomologically trivial 
element of $Z^1(\KK, \bG)).$ Since by definition $\gamma(X) = XC$ for some element 
$C \in \bC(\bG, r_{BD})(\overline{\KK})$, the cocycle $u_X$ takes values in 
$Z^1 (\mathbb{K},\bC(\bG, r_{BD})).$\footnote{As the reader has probably guessed, 
it will not necessarily be true that the class of our cocycle will any longer be trivial 
when viewed as taking values in the smaller group $\bC(\bG, r_{BD})$. This subtlety is  in fact the reason that allows Galois cohomology to be brought into be picture.}
 By  considering its cohomology class we obtain a map
$$ Z_{BD}^1 (\bG,r_{BD})\to H^1 (\mathbb{K},\bC(\bG, r_{BD})).$$ 
It remains to show that if $X$ and $Y$ are Belavin--Drinfeld cocycles, then $u_X$ is cohomologous $u_Y$ if and only if $X$ is equivalent to $Y.$ 

Assume that $X$ and $Y$ are equivalent. Then  $Y=QXC$ with $C\in \bC(\bG, r_{BD})(\overline{\KK})$ and
$Q\in \bG(\KK)$. Since $\gamma (Q)=Q$ for any $\gamma\in \mathcal{G}$, it follows that
$u_Y (\gamma )=C^{-1} u_X (\gamma ) \gamma (C)$, which means that $u_X$ and $u_Y$ are
cohomologous.

Conversely, if $u_X$ and $u_Y$ are
cohomologous as elements of 

\noindent $Z^1 (\mathbb{K}, \bC(\bG, r_{BD}))$ there exists 
 $C\in \bC(\bG, r_{\rm BD})(\overline{\KK})$ such 
that
$$
Y^{-1} \gamma (Y)=C^{-1} X^{-1} \gamma (X) \gamma (C)
$$
for all  $\gamma\in \mathcal{G}$.
It follows that $Q^{-1}=XCY^{-1}\in \bG(\mathbb{K})$.
This completes the proof  of the proposition. \qed
\end{proof}

The remarkable fact is that the the algebraic $\KK$-group $\bC(\bG, r_{BD}))$ is diagonalizable. Indeed since $r_{\rm BD} \in \fg \otimes_\KK \fg$ we can reason exactly as in \cite{KKPS1} Theorem 1 to conclude that.

\begin{theorem}\label{CisToral} $\bC(\bG, r_{BD})$ is a closed subgroup of $\bH.$
\qed
\end{theorem}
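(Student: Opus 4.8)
The plan is to show that the centralizer $\bC(\bG, r_{\rm BD})$ lands inside the maximal torus $\bH$ by exploiting the explicit shape of a Belavin--Drinfeld $r$-matrix with respect to the fixed Killing couple $(\bB, \bH)$. Recall that
\[
r_{\rm BD} = r_0 + \sum_{\alpha > 0} e_\alpha \otimes e_{-\alpha} + \sum_{\alpha \in (\Span \Gamma_1)^+}\sum_{k \in \mathbb{N}} e_\alpha \wedge e_{-\tau^k(\alpha)},
\]
where $r_0 \in \fh \otimes_\KK \fh$. First I would reduce to a statement purely about the Lie algebra $\fg$: since $\bG$ is connected, a point $g \in \bC(\bG, r_{\rm BD})(R)$ for a $\KK$-algebra $R$ satisfies $\Ad_g(r_{\rm BD}) = r_{\rm BD}$ in $(\fg \otimes_\KK \fg)(R)$, and one wants to conclude $g$ normalizes (indeed centralizes enough of) $\fh$ so as to force $g \in \bH(R)$. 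Because $r_{\rm BD} \in \fg \otimes_\KK \fg$ is defined over $\KK$, the centralizer $\bC(\bG, r_{\rm BD})$ is a closed $\KK$-subgroup scheme of $\bG$; it therefore suffices — by a standard fppf/faithfully flat descent argument, or simply by checking on $\overline{\KK}$-points together with Lie algebras since we are in characteristic $0$ and everything is smooth — to prove $\bC(\bG, r_{\rm BD})(\overline{\KK}) \subseteq \bH(\overline{\KK})$ and that the two group schemes have the same Lie algebra. Thus the heart of the matter is the computation over the algebraically closed field, which is exactly what \cite{KKPS1} Theorem 1 does; I would reproduce its argument in our notation.

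The computation goes as follows. Decompose $r_{\rm BD} = r_0 + \Omega_+ + N$ where $\Omega_+ = \sum_{\alpha>0} e_\alpha \otimes e_{-\alpha}$ and $N$ is the nilpotent ``twist'' term. The symmetric part of $r_{\rm BD}$ is $\tfrac12\Omega$ (the Casimir), which is $\bG$-invariant and so carries no information; the skew part $a = r_{\rm BD} - \tfrac12\Omega$ is what must be preserved. Now $a = (r_0 - \tfrac12\Omega_0) + \tfrac12\sum_{\alpha>0}(e_\alpha \otimes e_{-\alpha} - e_{-\alpha}\otimes e_\alpha) + N$, and one analyzes the ``span'' of $a$, i.e. the image of the associated map $\fg^* \to \fg$ (or the smallest subspace $V \subseteq \fg$ with $a \in V \otimes V$). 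Using the admissibility of the triple $(\Gamma_1, \Gamma_2, \tau)$ — specifically that every $\alpha \in \Gamma_1$ leaves $\Gamma_1$ under some power of $\tau$, which prevents infinite cycles — one checks that the relevant subspace contains all the root vectors $e_{\pm\alpha}$ for $\alpha \in \Delta_+$, hence all of $\fg$, or at any rate that the stabilizer in $\bG_{\rm ad}$ of $a$ preserves the pair of opposite Borels $(\fb, \fb^-)$ and therefore preserves $\fh = \fb \cap \fb^-$. An element of $\bG$ normalizing $\fh$ and fixing (the skew part of) $r_{\rm BD}$ — in particular fixing the Cartan term $r_0$, whose nondegeneracy properties pin down $\fh$ — must lie in the centralizer of $\fh$, which for a split reductive group is $\bH$ itself. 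Chasing the same identities on the Lie algebra level shows $\Lie(\bC(\bG, r_{\rm BD})) \subseteq \fh$, so the inclusion is an inclusion of smooth group schemes of the expected kind, giving the result; since $\bH$ is a split torus this also yields that $\bC(\bG, r_{\rm BD})$ is diagonalizable, as asserted in the ensuing discussion.

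The main obstacle is the bookkeeping around the twist term $N = \sum_{\alpha \in (\Span\Gamma_1)^+}\sum_k e_\alpha \wedge e_{-\tau^k(\alpha)}$: one has to verify that adding this (potentially large) nilpotent piece does not enlarge the stabilizer beyond $\bH$, and this is precisely where the admissibility hypothesis on $\tau$ is used — without it the ``cycles'' of $\tau$ could conspire to produce extra invariance. I would handle this by the weight-space argument already in \cite{KKPS1}: project the equation $\Ad_g(r_{\rm BD}) = r_{\rm BD}$ onto $\fh \otimes \fh$ and onto the various $\fg_\alpha \otimes \fg_\beta$ components, and use the nilpotency of $\ad$ on the $N$-part together with $\tau^k(\alpha) \notin \Gamma_1$ for large $k$ to kill the cross terms, forcing the semisimple part of $g$ into $\bH$ and its unipotent part to be trivial. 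The remaining ingredient — descent from $\overline{\KK}$ to $\KK$, i.e. upgrading ``$\overline{\KK}$-points in $\bH$'' plus ``Lie algebra in $\fh$'' to ``closed $\KK$-subgroup of $\bH$'' — is routine in characteristic $0$ since all the group schemes in sight are smooth and $\bH$ is defined over $\KK$; one simply notes that $\bC(\bG, r_{\rm BD})$, being the scheme-theoretic stabilizer of a $\KK$-rational tensor, is a closed $\KK$-subgroup of $\bG$ whose geometric points and Lie algebra both lie in $\bH$, hence it equals its own schematic image in $\bH$.
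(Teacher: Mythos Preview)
Your proposal is correct and follows essentially the same approach as the paper: the paper's entire proof is the one-line observation that, since $r_{\rm BD}\in\fg\otimes_\KK\fg$ is $\KK$-rational (Theorem~\ref{CartanF}), one can reason exactly as in \cite{KKPS1} Theorem~1, and your write-up is simply an expanded sketch of that cited argument together with the routine descent from $\overline{\KK}$ to $\KK$. There is no substantive difference in method.
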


Combining this last result with Proposition \ref{mainuntwistedprop} we obtain, with the aid of Hilbert's theorem 90, that

\begin{corollary}\label{connected} If the algebraic $\KK$-group $\bC(\bG, r_{BD})$ is connected then 

$H_{BD}^1 (\bG,r_{\rm BD})$ is trivial. \qed
\end{corollary}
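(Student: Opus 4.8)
The plan is to combine the injection of pointed sets from Proposition \ref{mainuntwistedprop} with the structure theory for the centralizer group established in Theorem \ref{CisToral}, and then to invoke the vanishing of Galois cohomology for connected diagonalizable groups. First I would recall that by Theorem \ref{CisToral} the $\KK$-group $\bC(\bG, r_{BD})$ is a closed subgroup of the split torus $\bH$, hence it is diagonalizable (a group of multiplicative type). If in addition $\bC(\bG, r_{BD})$ is connected, then a connected diagonalizable group over a field is in fact a torus, and since it sits inside the \emph{split} torus $\bH$ it is itself split. Thus under the connectedness hypothesis $\bC(\bG, r_{BD}) \cong \mathbb{G}_m^{\,d}$ for some $d \geq 0$.

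Next I would apply Hilbert's Theorem 90 in its general form, $H^1(\KK, \mathbb{G}_m) = 1$, together with the fact that Galois cohomology commutes with finite products of pointed sets, to conclude that
$$
H^1(\KK, \bC(\bG, r_{BD})) \cong H^1(\KK, \mathbb{G}_m)^{\,d} = 1.
$$
Feeding this into the injection $H_{BD}^1(\bG, r_{BD}) \hookrightarrow H^1(\KK, \bC(\bG, r_{BD}))$ supplied by Proposition \ref{mainuntwistedprop} forces $H_{BD}^1(\bG, r_{BD})$ to be a one-element pointed set; since that distinguished point is precisely the class of the identity cocycle, every Belavin--Drinfeld cocycle is equivalent to the identity, which is the definition of triviality. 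This completes the argument.

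The only genuinely non-formal point, and hence the step I would treat most carefully, is the passage from ``connected closed subgroup of a split torus'' to ``split torus''. In characteristic $0$ every group of multiplicative type is smooth, and a smooth connected affine group whose group of characters is torsion-free is a torus; being a subgroup of the split torus $\bH$ it is diagonalized by the same basis and therefore split, so no separability or field-of-definition subtleties arise here. Everything else — the product decomposition of $H^1$ over a direct product of multiplicative-type groups, and Hilbert 90 — is standard and already cited in the paper via \cite{Se}. I would therefore keep the proof to essentially three lines: (i) connected $+$ diagonalizable over a field of characteristic $0$ $\Rightarrow$ split torus; (ii) Hilbert 90 gives trivial $H^1$; (iii) apply the injection of Proposition \ref{mainuntwistedprop}.
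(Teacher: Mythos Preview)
Your proof is correct and follows exactly the approach the paper intends: use Theorem~\ref{CisToral} to see that $\bC(\bG,r_{\rm BD})$ sits inside the split torus $\bH$, note that connectedness then forces it to be a split torus, apply Hilbert~90, and conclude via the injection of Proposition~\ref{mainuntwistedprop}. The paper records this as a one-line deduction (``combining this last result with Proposition~\ref{mainuntwistedprop} we obtain, with the aid of Hilbert's theorem~90''), and your write-up simply makes explicit the step ``connected closed subgroup of a split torus $\Rightarrow$ split torus'' that the paper leaves implicit.
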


One of the most important $r$-matrices is the so-called Drinfeld--Jimbo $r_{\rm DJ}$ given by

\begin{definition}\label{rDJdefinition}
$r_\text{\rm DJ} = \sum_{\alpha>0}e_{\alpha}\otimes e_{-\alpha} + \frac{1}{2}\, \Omega_0$
\end{definition}
where $\Omega_0$ , as has already been mentioned, stands for the $\fh \otimes_\KK \fh $ component of the Casimir operator $\Omega$ of $\fg$ written with respect to our choice of $(\fb,\fh).$

In [KKPS1] it was conjectured that $H_{BD}^1 (\bG,r_{\rm DJ})$ is trivial under the assumption that $\bG$ be simple and $\mathbb{K}=\mathbb{C} ((\hbar ))$. 
The conjecture was established by a case-by-case reasoning for most of the classical groups. Further progress on this problem 
(still for the classical algebras but now with an arbitrary base field of characteristic $0$) is given in \cite{KKPS3}. 
The Galois cohomology interpretation we have given provides an affirmative much more general answer to this question. 

\begin{theorem}
$H_{BD}^1 (\bG,r_{\rm DJ})$ is trivial for any split reductive group $\bG$ over a field $\mathbb{K}$ of characteristic $0.$

\end{theorem}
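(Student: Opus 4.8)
The plan is to deduce this from Corollary \ref{connected} by showing that $\bC(\bG, r_{\rm DJ})$ is a connected algebraic $\KK$-group. By Theorem \ref{CisToral} we already know $\bC(\bG, r_{\rm DJ})$ is a closed subgroup of the split torus $\bH$, so it is a diagonalizable group; connectedness is therefore equivalent to showing it is a subtorus, i.e. that its character group is torsion-free, or equivalently that $\bC(\bG, r_{\rm DJ})$ contains no nontrivial finite subgroup scheme. Concretely I would compute $\bC(\bG, r_{\rm DJ})$ by hand: an element $t \in \bH(R)$ lies in the centralizer iff $\Ad_t(r_{\rm DJ}) = r_{\rm DJ}$, and since $\Ad_t$ fixes the Cartan part $\frac12\Omega_0$ automatically and scales each $e_\alpha \otimes e_{-\alpha}$ by $\alpha(t)(-\alpha)(t) = 1$, the defining equations are simply $\alpha(t) \cdot \alpha(t)^{-1} = 1$ for all positive roots $\alpha$ — which are vacuous. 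Wait: one must be careful, because $\Ad_t(e_\alpha \otimes e_{-\alpha}) = \alpha(t)\,(-\alpha)(t)\, e_\alpha\otimes e_{-\alpha} = e_\alpha \otimes e_{-\alpha}$ holds identically. So the condition $\Ad_t(r_{\rm DJ}) = r_{\rm DJ}$ imposes no constraint at all from the off-diagonal part, and the Cartan part is also preserved. Hence $\bC(\bG, r_{\rm DJ}) = \bH$, which is a split torus and in particular connected.

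Having identified $\bC(\bG, r_{\rm DJ}) = \bH$ (or at worst a subtorus, once one double-checks there is no subtlety with the admissible-triple being empty for $r_{\rm DJ}$), the result is immediate: by Corollary \ref{connected}, since $\bC(\bG, r_{\rm DJ})$ is connected, $H_{BD}^1(\bG, r_{\rm DJ})$ is trivial. Equivalently, one can run the argument directly: by Proposition \ref{mainuntwistedprop} there is an injection $H_{BD}^1(\bG, r_{\rm DJ}) \hookrightarrow H^1(\KK, \bH)$, and $H^1(\KK, \bH) = 1$ by Hilbert's Theorem 90 (generalized form: $H^1$ of a split torus over any field vanishes), so the pointed set $H_{BD}^1(\bG, r_{\rm DJ})$ has a single element, namely the class of the identity cocycle.

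The only place that requires genuine care — and the main potential obstacle — is verifying that $\bC(\bG, r_{\rm DJ})$ really is all of $\bH$ (or at least connected) in the slightly delicate cases flagged in the paper's footnotes, namely $\bG = \bGL_n$, $\bSL_n$, or $\mathrm{\bf SO}_{2n}$. For $r_{\rm DJ}$ the admissible triple is $(\Gamma_1, \Gamma_2, \tau) = (\emptyset, \emptyset, \emptyset)$ and the continuous parameter is pinned down to $\frac12\Omega_0$, so the $\tau$-compatibility equations $(\tau(\alpha)\otimes 1 + 1\otimes\alpha)(r_0) = 0$ are vacuous and there is no residual diagonal constraint; thus the computation above should go through uniformly. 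One should nonetheless confirm that the centralizer is computed inside $\bG$ and not $\bG_{\rm ad}$ — but Theorem \ref{CisToral} already places it inside the chosen maximal torus $\bH$ of $\bG$, so no discrepancy arises. Modulo this routine check, the proof is complete.

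\begin{proof}
By Theorem \ref{CisToral}, $\bC(\bG, r_{\rm DJ})$ is a closed subgroup of the split maximal torus $\bH$, hence diagonalizable. For $R$ a commutative $\KK$-algebra and $t \in \bH(R)$, the adjoint action satisfies $\Ad_t(e_\alpha \otimes e_{-\alpha}) = \alpha(t)\,(-\alpha)(t)\,(e_\alpha \otimes e_{-\alpha}) = e_\alpha \otimes e_{-\alpha}$, and $\Ad_t$ fixes the Cartan part $\frac12\Omega_0 \in \fh \otimes_\KK \fh$. Therefore $\Ad_t(r_{\rm DJ}) = r_{\rm DJ}$ for every $t \in \bH(R)$, which shows $\bC(\bG, r_{\rm DJ}) = \bH$. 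In particular $\bC(\bG, r_{\rm DJ})$ is connected, so by Corollary \ref{connected} the set $H_{BD}^1(\bG, r_{\rm DJ})$ is trivial. (Alternatively: Proposition \ref{mainuntwistedprop} gives an injection $H_{BD}^1(\bG, r_{\rm DJ}) \hookrightarrow H^1(\KK, \bH)$, and the latter vanishes by Hilbert's Theorem 90 since $\bH$ is a split torus over $\KK$.) \qed
\end{proof}
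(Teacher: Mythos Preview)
Your proof is correct and follows essentially the same route as the paper: both arguments observe that $\bC(\bG, r_{\rm DJ}) = \bH$ (since every element of the torus fixes each $e_\alpha \otimes e_{-\alpha}$ and the Cartan part $\frac{1}{2}\Omega_0$), and then invoke Corollary~\ref{connected} (equivalently, Proposition~\ref{mainuntwistedprop} together with Hilbert~90). Your write-up is more explicit about the functor-of-points verification, but the substance is identical.
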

\begin{proof}
We already know that $\bC(\bG, r_{DJ})$ is a closed subgroup of our split torus $\bH$. 
It is also clear from (\ref{rDJdefinition}) that all elements of $\bH(\overline{\KK})$ fix $\overline{r_{\rm DJ}}$. 
This yields $\bC(\bG, r_{DJ}) = \bH.$ By the last Corollary the Theorem follows.
\end{proof}

\begin{remark}
{\rm Since $\bC(\bG, r_{\rm BD})$ is a closed subgroup of $\bH$ it is of the form$$
\bC(\bG, r_{\rm BD}) = \bT \times \bmu_{m_1} \times \cdots \times \bmu_{m_n}
$$
where $\bT$ is a split torus over $\KK$ and  $\bmu_{m} $ is the finite multiplicative $\KK$-group of $m$--roots of unity.

Thus
$$
H^1 (\mathbb{K},\bC(\bG, r_{\rm BD})) = \mathbb{K}^{\times}/ (\mathbb{K}^{\times})^{m_1}\times \cdots \times \mathbb{K}^{\times}/ (\mathbb{K}^{\times})^{m_n}
$$

It is possible to deduce from the results of [SP, KKSP1,2,3] that for $\bG=\bGL(n)$, $\text{\bf SO}(2n+1)$, $\text{\bf Sp}(n)$ that $H_{BD}^1 (\bG,r_{\rm BD})$ is trivial. 
Though  the centralizer of Belavin--Drinfeld $r$-matrices were not explicitly computed in these papers, it  is natural to conjecture that that they are always {\it connected.} 
If so, then Corollary \ref{connected} 
would {\it show} that the corresponding $H^1_{BD}$ is trivial. This approach is not only sensible, but likely the only reasonable way of attacking the problem in  the exceptional types.

The situation for $\bG=\text{\bf SO}(2n)$ is different. Assume that $\alpha_n$ and $\alpha_{n-1}$ are the end vertices
of the Dynkin diagram of $\mathfrak{so}(2n)$. Assume also $\alpha_{n-1}=\tau^k (\alpha_{n})$ for some integer $k$,
where $\tau : \Gamma_1 \to\Gamma_2$ defines $r_{\rm BD}$. It was shown in [KKSP1] that
$\bC(\bG, r_{\rm BD})=\bT \times \Bbb{Z}/2\Bbb{Z}$ in this case and $\bC(\bG , r_{\rm BD})=\bT $ otherwise.

From our results it follows that $H_{BD}^1 (\bG,r_{\rm BD})$ is trivial in the second case.

Since $H^1 (\mathbb{K}, \bC(\bG r_{\rm BD}))=\mathbb{K}^{\times}/ (\mathbb{K}^{\times})^2$ in the first case,
to prove that the corresponding $H^1 (\text{\bf SO}(2n), r_{BD})$ is isomorphic to 
$\mathbb{K}^{\times} / (\mathbb{K}^{\times})^2$
it is sufficient to construct a non-trivial cocycle for any non-square $d\in \mathbb{K}$.
It is not difficult to see that such a cocycle can be defined by means of the element
$$
diag (d_1,d_2,...,d_{2n})\in \text{\bf SO}(2n)
$$
with $d_1=d_2=...=d_{n-1}=d_{n+2}=...=1$ and $d_n=d_{n+1}=d^{1/2}$.

We see again that the Galois cohomology point of view ``explains" why certain Belavin--Drinfeld cohomolgies are trivial, and why in the case of ${\bf SO}_{2n}$ the appearance of non-trivial classes is natural.}

\end{remark}

We end this section with a statement, which provides a complete description of
non-twisted Belavin--Drinfeld cohomologies in terms of the Galois cohomologies of 
algebraic groups.

\begin{theorem} Let $\bG$ be a split reductive group over a field $\KK$ of characteristic $0.$ 
Assume that the Lie algebra $\fg$ of the adjoint group of $\bG$ is simple. 
For any Belavin--Drinfeld $r$-matrix $r_{BD}$ in $\fg \otimes_\KK \fg$ the  sequence
$$
1 \to H_{BD}^1 (\bG,r_{\rm BD})\to H^1 (\mathbb{K} , \bC(\bG, r_{\rm BD}))\to H^1 (\mathbb{K},\bG)
$$
is exact.
\end{theorem}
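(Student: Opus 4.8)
The plan is to recognize this as the exact sequence of pointed sets coming from the inclusion $\bC(\bG, r_{\rm BD}) \hookrightarrow \bG$, twisted by the already-established picture of Proposition~\ref{mainuntwistedprop}. Write $\bC = \bC(\bG, r_{\rm BD})$ for brevity. Functoriality of Galois cohomology applied to $\bC \hookrightarrow \bG$ yields a map of pointed sets $\iota_* : H^1(\KK, \bC) \to H^1(\KK, \bG)$, and we already have the injection $j : H^1_{BD}(\bG, r_{\rm BD}) \hookrightarrow H^1(\KK, \bC)$ from Proposition~\ref{mainuntwistedprop}. So the statement to prove is: (i) the composite $\iota_* \circ j$ is trivial, i.e. sends every class to the distinguished point of $H^1(\KK, \bG)$; and (ii) conversely, $\ker(\iota_*)$ — the preimage of the distinguished point — is exactly the image of $j$.

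First I would prove (i), which is immediate from the construction in Proposition~\ref{mainuntwistedprop}. There, a Belavin--Drinfeld cocycle $X \in \bG(\overline{\KK})$ gives the cocycle $u_X(\gamma) = X^{-1}\gamma(X)$ valued in $\bC(\overline{\KK})$, and the footnote in that proof already observes that $u_X$, viewed in $Z^1(\KK, \bG)$, is a coboundary — it is the principal cocycle attached to $X$. Hence its class in $H^1(\KK, \bG)$ is trivial, which is exactly $\iota_*(j([X])) = 1$. So the composite is trivial and the image of $j$ lands in $\ker \iota_*$.

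The substantive direction is (ii): if a class in $H^1(\KK, \bC)$ dies in $H^1(\KK, \bG)$, it comes from $H^1_{BD}$. Take a cocycle $v \in Z^1(\KK, \bC)$ whose image in $Z^1(\KK, \bG)$ is a coboundary; then there is $X \in \bG(\overline{\KK})$ with $v(\gamma) = X^{-1}\gamma(X)$ for all $\gamma$. Since $v$ takes values in $\bC(\overline{\KK})$, by the very definition of a Belavin--Drinfeld cocycle $X \in Z_{BD}(\bG, r_{\rm BD})$, so $u_X = v$ and $[v] = j([X]) \in \mathrm{im}(j)$. The one point requiring care is well-definedness at the level of cohomology classes rather than cocycles: if $v' = c^{-1} v \,\gamma(c)$ for $c \in \bC(\overline{\KK})$ is another representative and $v' = X'^{-1}\gamma(X')$, then $X'$ and $Xc$ differ by an element of $\bG(\KK)$ on the left, so $[X'] = [X]$ in $H^1_{BD}$ by the equivalence relation defining Belavin--Drinfeld cohomology ($X_1 = Q X_2 C$ with $Q \in \bG(\KK)$, $C \in \bC(\overline{\KK})$); here one absorbs $c$ into the $C$-factor. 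This uses nothing beyond the definitions already recalled and the standard dictionary between principal homogeneous spaces and $H^1$.

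The main (and only real) obstacle is bookkeeping: keeping straight which equivalences are quotiented out on the $H^1_{BD}$ side versus the $H^1(\KK, \bC)$ side, and checking that the ``twisting by a coboundary'' argument in (ii) respects both. Exactness at $H^1(\KK, \bC)$ is precisely the assertion that these two notions of equivalence agree on the relevant cocycles, which is built into Definitions for $Z_{BD}$ and its equivalence relation — so once Proposition~\ref{mainuntwistedprop} is in hand, the proof is essentially a diagram chase. (The simplicity hypothesis on $\fg$ and the reductivity of $\bG$ are not needed for this formal statement; they are presumably carried along only to match the running setup and to ensure $\bC$ is as described via Theorem~\ref{CisToral}.)
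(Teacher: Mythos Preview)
Your proof is correct and is exactly the unpacking the paper has in mind: its proof is one sentence, ``This is a direct consequence of the various definitions and of Proposition~\ref{mainuntwistedprop} (both the statement and the proof),'' and what you have written is precisely that consequence spelled out. One small redundancy: the well-definedness check in (ii) is already forced by the injectivity of $j$ from Proposition~\ref{mainuntwistedprop}, so you need not verify it separately (though your direct verification is also fine).
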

\begin{proof}
This is a direct consequence of the various definitions and of Proposition \ref{mainuntwistedprop} (both the statement and the proof). \qed
\end{proof}

\medskip

From Steinberg's theorem (see \cite{Se} Ch III Theorem 3.2.1') we obtain

\begin{corollary} Assume that $\KK$ is of cohomological dimension 1.\footnote{For example $\KK = \Bbb{C}((t)).$ This is the case most relevant to quantum groups.} Then
$$
H_{BD}^1 (\bG,r_{BD}) = H^1 (\mathbb{K} , \bC(\bG, r_{BD}))
$$
\end{corollary}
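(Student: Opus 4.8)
The plan is to combine the exact sequence from the previous theorem with a vanishing result for the right-hand term. By the theorem just proved, for any Belavin--Drinfeld $r$-matrix $r_{\rm BD} \in \fg \otimes_\KK \fg$ we have an exact sequence of pointed sets
$$
1 \to H_{BD}^1 (\bG,r_{\rm BD})\to H^1 (\mathbb{K} , \bC(\bG, r_{\rm BD}))\to H^1 (\mathbb{K},\bG).
$$
So it suffices to show that the map $H^1 (\mathbb{K} , \bC(\bG, r_{\rm BD})) \to H^1 (\mathbb{K},\bG)$ has trivial image, equivalently (since the sequence is already known to be exact in the middle) that the injection $H_{BD}^1 (\bG,r_{\rm BD})\to H^1 (\mathbb{K} , \bC(\bG, r_{\rm BD}))$ of Proposition \ref{mainuntwistedprop} is surjective. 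The cleanest route is to show $H^1(\KK, \bG) = 1$ under the cohomological dimension hypothesis: then exactness forces the image of $H^1 (\mathbb{K} , \bC(\bG, r_{\rm BD}))$ in $H^1 (\mathbb{K},\bG)$ to be the single distinguished point, hence every class in $H^1 (\mathbb{K} , \bC(\bG, r_{\rm BD}))$ lifts to $H_{BD}^1 (\bG,r_{\rm BD})$, and the injection becomes a bijection.

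The input for $H^1(\KK,\bG) = 1$ is Steinberg's theorem, as recorded in \cite{Se}, Ch.\ III, Theorem 3.2.1$'$: if $\KK$ is a perfect field of cohomological dimension $\le 1$, then $H^1(\KK, \bG)$ is trivial for every connected linear algebraic group $\bG$. Since $\KK$ has characteristic $0$ it is perfect, and by hypothesis $\mathrm{cd}(\KK) = 1$; moreover $\bG$ is reductive and, being split, in particular connected. Hence Steinberg's theorem applies and gives $H^1(\KK, \bG) = 1$.

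I would then simply assemble these pieces: first invoke the exact sequence from the preceding theorem, then note that its rightmost term is a one-point set by Steinberg's theorem, and conclude by exactness that the first map is surjective, hence the bijection
$$
H_{BD}^1 (\bG,r_{BD}) = H^1 (\mathbb{K} , \bC(\bG, r_{BD})).
$$
There is no real obstacle here; the only point deserving care is checking that the hypotheses of Steinberg's theorem are met --- that $\bG$ is connected (clear, since it is split reductive) and that the cohomological dimension hypothesis is exactly the one quoted in \cite{Se} --- and that the exactness of the three-term sequence is genuinely exactness of pointed sets, so that triviality of the last term really does force surjectivity of the first map rather than merely triviality of a kernel. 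Both are immediate from what has already been established.
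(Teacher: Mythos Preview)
Your proposal is correct and follows exactly the approach the paper intends: the corollary is obtained by combining the exact sequence of the preceding theorem with Steinberg's theorem, which gives $H^1(\KK,\bG)=1$ since $\bG$ is connected (split reductive) and $\KK$ is perfect of cohomological dimension $\le 1$. The paper's own proof is simply the one-line invocation of Steinberg's theorem, and your argument spells out precisely that deduction.
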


\section{Twisted Belavin--Drinfeld cohomologies}

In this section we assume that $\mathbb{K}=k((t))$ where $k$ is algebraically closed of characteristic $0.$
Fix an element $j \in \overline{\KK}$ such that $j^2 = t.$ We will denote the quadratic extension $\mathbb{K}(j)$ of $\KK$  by $\LL$. 
Twisted Belavin--Drinfeld cohomologies where introduced in \cite{KKPS1} and \cite{KKPS3}
to describe a new class of Lie bialgebras structure on $\fg$ whose Drinfeld double (see \cite{ES} for the definition and constriction of this object) is isomorphic to $\fg \otimes_\KK \LL.$ 

In this section our reductive group $\bG$ will be assumed to be of adjoint type. Within the general framework described in \S 2, 
our analysis corresponds to the case  when in (\ref{crational}) the constant $c$ does not belong to $\KK$. As we have seen, then $c^2 \in \KK.$ 

Before we recall how these Lie bialgebras appear and what the relevant definitions are, we introduce some notation
 and give an explicit description of  $\text{\rm Gal}(\KK)$ and $\text{\rm Gal}(\LL)$ that will be used in the proofs.

Fix a compatible set of primitive 
$m^{\rm th}$ roots of unity $\xi_m ,$ namely such that  $\xi _{me} ^e = \xi_m
$ for all $e > 0.$ Fix also, with the obvious meaning, a compatible set $t^\frac{1}{m}$ 
of $m^{\rm th}$ roots of $t$ in $\overline{\KK}.$ There is no loss of generality in assuming that $t^\frac{1}{2} = j.$ 

Let $\KK_{m} =  \Bbb{C}((t^\frac{1}{m})).$ We can then identify $\text{\rm Gal}(\KK_m/\KK)$ 
with $\Z/m\Z $ where for each $e \in \Z$ the corresponding element $\ol{e} \in \Z/m\Z$  acts on $\KK_m$
via $ ^{\ol {e}} t^{\frac{1}{m}}_i = \xi  ^{e}_{m}
t^{\frac{1}{m}}_i.$
\smallskip

We have $\overline{\KK}  = {\limind} \,\, \KK_m.$ The absolute Galois group $\text{\rm Gal}(\KK)$ is the 
profinite completion $\widehat{\Z}$ thought as the inverse limit of the Galois groups $\text{\rm Gal}(\KK_m/\KK)$ 
as described above. It will henceforth be denoted by $\mathcal{G}$ as per our convention. If $\gamma_1$ denotes the standard profinite 
generator of $\widehat{\Z}$, then the action of $\gamma$ on $\overline{\KK}$ is given by
$$^{\gamma_1}t^\frac{1}{m} = \xi_mt^\frac{1}{m}$$
Note for future reference that $\gamma_2 := 2\gamma_1$ is the canonical profinite generator of $\text{\rm Gal}(\LL)$
\subsection{Definition of the twisted cohomologies}
Twisted cohomologies are a tool  in the  study of Lie bialgebra structures on $\fg$ such that
$$
\delta (x)=[x\otimes 1+1\otimes x, r], \ x\in \fg
$$
with an r-matrix $r$ satisfying condition $r+r^{21}=j\Omega.$\footnote{We are in the situation when $c$ in (\ref{equivalent}) is not in $\KK.$ Strictly speaking we should have $c = aj$ with $a \in \KK^\times$. Since we are working on Lie bialgebras up to equivalence we may assume  without loss of generality  that
$a = 1$.} 

The following result is proved in [KKSP1].

\begin{proposition}

Lie bialgebra structures on $\fg = \mathfrak{sl}_n$ such that the corresponding double
is isomorphic to $\fg \otimes_\KK \mathbb{L}$ are given by the formula
$$
\delta (a)=[a\otimes 1+1\otimes a, r]
$$
where $r$ satisfies $r+r^{21} =j\Omega$ and $CYB(r)=0.$

Furthermore there exists a (unique)  $r$-matrix $r_{\rm BD}$ from the Belavin--Drinfeld list of $\fg$ and an element $X \in \bG(\overline{\KK})$ such that

$ (i) \,\,  r=j Ad_X(r_{\rm BD})$

$ (ii \,a) \,\, X^{-1}\gamma(X)\in \bC(\bG, r) \,\, \text{for any} \,\, \gamma\in \text{\rm Gal}(\mathbb{L})$

$(ii \,b) \,\Ad_{X^{-1}\gamma_1 (X)}(r_{\rm BD})=r_{\rm BD}^{21}.$
\end{proposition}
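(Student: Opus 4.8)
The plan is to follow the Etingof--Kazhdan / Belavin--Drinfeld machinery already invoked in \S3, specialized to the twisted situation $r + r^{21} = j\Omega$. First I would recall that the classical double $D(\fg,\delta)$ of a Lie bialgebra $(\fg,\delta)$ with $\delta = \delta_r$ is, as a Lie algebra, $\fg \oplus \fg$ equipped with an invariant nondegenerate symmetric bilinear form that is a scalar multiple of $\Omega$ on the first factor; the scalar is precisely the coefficient $c$ appearing in $r + r^{21} = c\,\Omega$. Over $\overline{\KK}$ this double becomes $\overline{\fg} \oplus \overline{\fg}$, so the only obstruction to it being $\KK$-isomorphic to $\fg \oplus \fg$ (rather than to $\fg \otimes_\KK \LL$) is exactly whether $c$ is a square in $\KK^\times$ — by \cite{KKPS3} Theorem 2.7 we know $c^2 \in \KK$, and the splitting field of $D$ is then $\KK(\sqrt{c}) = \KK(j) = \LL$ precisely when $c \notin \KK^\times$ up to squares. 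After the normalization of the footnote ($a = 1$, so $c = j$), this identifies the Lie bialgebra structures on $\fg = \mathfrak{sl}_n$ with double $\fg \otimes_\KK \LL$ with the $r$-matrices satisfying $r + r^{21} = j\Omega$ and $\mathrm{CYB}(r) = 0$; this is the first assertion.

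For the second part I would argue exactly as in the untwisted derivation leading to \eqref{keyeqinK}. Set $s = j^{-1}\overline{r} \in \overline{\fg}\otimes_{\overline{\KK}}\overline{\fg}$. Then $s + s^{21} = \Omega$ and $\mathrm{CYB}(s)=0$, so by the Belavin--Drinfeld classification over $\overline{\KK}$ there is a unique $r_{\rm BD}$ from the list and an $X \in \bG(\overline{\KK})$ with $s = \Ad_X(r_{\rm BD})$, i.e. $r = j\,\Ad_X(r_{\rm BD})$; this is $(i)$. Now for $\gamma \in \mathcal{G}$ apply $\gamma$ to $r = j\Ad_X(r_{\rm BD})$. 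Since $r \in \fg\otimes_\KK\fg$ is $\mathcal{G}$-fixed and $r_{\rm BD}$ is likewise $\mathcal{G}$-fixed (it lies in $\fg\otimes_\KK\fg$, a fact one gets here from \cite{KKPS3} just as Theorem \ref{CartanF} gave it in the untwisted case), we get $j\,\Ad_X(r_{\rm BD}) = \gamma(j)\,\Ad_{\gamma(X)}(r_{\rm BD})$. For $\gamma \in \mathrm{Gal}(\LL)$ we have $\gamma(j) = j$, hence $\Ad_{X^{-1}\gamma(X)}(r_{\rm BD}) = r_{\rm BD}$, which by definition of the centralizer says $X^{-1}\gamma(X) \in \bC(\bG, r_{\rm BD})(\overline{\KK}) = \bC(\bG,r)(\overline{\KK})$; this is $(ii\,a)$. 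For the profinite generator $\gamma_1$ of $\mathcal{G}$ we instead have $\gamma_1(j) = \xi_2 j = -j$, so the identity becomes $j\,\Ad_X(r_{\rm BD}) = -j\,\Ad_{\gamma_1(X)}(r_{\rm BD})$, i.e. $\Ad_{X^{-1}\gamma_1(X)}(r_{\rm BD}) = -r_{\rm BD}$; using $r_{\rm BD} + r_{\rm BD}^{21} = \Omega$ together with the $\bG$-invariance of $\Omega$ (so $\Ad_Y(\Omega) = \Omega$ and $\Ad_Y(v^{21}) = (\Ad_Y(v))^{21}$ for any $Y$), one converts $-r_{\rm BD} = r_{\rm BD} - \Omega$ into $\Ad_{X^{-1}\gamma_1(X)}(r_{\rm BD}) = r_{\rm BD}^{21}$, which is $(ii\,b)$.

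The one point requiring genuine care — and the step I expect to be the main obstacle — is the rationality of $r_{\rm BD}$, i.e. that the Belavin--Drinfeld representative picked out over $\overline{\KK}$ actually lies in $\fg\otimes_\KK\fg$, together with uniqueness of $r_{\rm BD}$ in the twisted normalization. In the untwisted case this was Theorem \ref{CartanF}, obtained via \cite{KKPS1} Theorem 3 from the Galois-equivariance of the situation; in the twisted case the normalizing scalar is $j \notin \KK$, so one must track how $\mathcal{G}$ permutes the finite Belavin--Drinfeld list and check that $\gamma_1$ sends $r_{\rm BD}$ to another list element equivalent to $r_{\rm BD}^{21}$, which by the uniqueness clause of the classification forces it back to (a $\bG_{\rm ad}(\overline{\KK})$-conjugate of) $r_{\rm BD}$ itself and yields descent. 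Concretely I would either cite \cite{KKPS3} for the twisted analogue of Theorem \ref{CartanF}, or reprove it by the same cocycle argument: the map $\gamma \mapsto X^{-1}\gamma(X)$ lands in the normalizer of $\bC(\bG,r_{\rm BD})$, and a Galois-descent argument on the pair $(\fh,\Gamma)$-combinatorics of admissible triples shows the twist is trivial so $r_{\rm BD}$ descends to $\KK$. The remaining assertions $(i)$, $(ii\,a)$, $(ii\,b)$ are then the formal manipulations above, and the first paragraph's double-computation is standard from \cite{ES} and \cite{KKPS3} Theorem 2.7.
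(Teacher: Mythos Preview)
The paper does not give its own proof of this proposition: it is simply attributed to \cite{KKPS1}. So there is nothing to compare on the level of the first assertion, and your outline for that part (identifying the double with $\fg\otimes_\KK\LL$ via the scalar $c$ and \cite{KKPS3} Theorem 2.7) is in the right spirit.

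There is, however, a genuine error in your derivation of $(ii\,a)$ and $(ii\,b)$. You assert that ``$r \in \fg\otimes_\KK\fg$ is $\mathcal{G}$-fixed''. This is false: the hypothesis is $r + r^{21} = j\Omega$ with $j \notin \KK$, and if $r$ were rational then so would be $r + r^{21}$, contradicting $j\Omega \notin \fg\otimes_\KK\fg$. Your intermediate equation $\Ad_{X^{-1}\gamma_1(X)}(r_{\rm BD}) = -r_{\rm BD}$ is in fact impossible: applying the flip and adding gives $\Ad_Y(\Omega) = -\Omega$, which contradicts the $\bG$-invariance of $\Omega$. The subsequent algebraic ``conversion'' (you write $-r_{\rm BD} = r_{\rm BD} - \Omega$; the correct identity is $-r_{\rm BD} = r_{\rm BD}^{21} - \Omega$) cannot repair this, since it still leaves an unwanted $-\Omega$ on the right.

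The fix is exactly the content of the proposition that follows in the paper: one must first establish that $\gamma(r) = r$ for $\gamma \in \mathrm{Gal}(\LL)$ while $\gamma_1(r) = -r^{21}$ (this uses that $\delta$ is defined over $\KK$, not that $r$ is). With that in hand, applying $\gamma_1$ to $r = j\,\Ad_X(r_{\rm BD})$ gives $-r^{21} = -j\,\Ad_{\gamma_1(X)}(r_{\rm BD})$; comparing with $-r^{21} = -j\,\Ad_X(r_{\rm BD}^{21})$ yields $(ii\,b)$ directly, with no spurious $\Omega$ term. Your argument for $(ii\,a)$ happens to survive because $\gamma(r) = r$ \emph{does} hold on the subgroup $\mathrm{Gal}(\LL)$, but the justification you give for it is still the wrong one.
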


To define twisted Belavin--Drinfeld cohomology we will need the following more general
\begin{proposition}

Let $r \in \overline{\fg}\otimes_{\overline{\KK}} \overline{\fg}$ be an r-matrix which defines 
a Lie bialgebra structure on $\fg$ and such that $r+r^{21} =j\Omega$.
Then 
\begin{itemize}
\item  $\gamma (r)=r$ for all $\gamma\in {\rm Gal}(\mathbb{L})$

\item $\gamma_1 (r)=-r^{21}$
\end{itemize}
%
%
%
\end{proposition}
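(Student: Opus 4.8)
The statement asserts two facts about an $r$-matrix $r \in \overline{\fg}\otimes_{\overline{\KK}}\overline{\fg}$ satisfying $r + r^{21} = j\Omega$ and defining a Lie bialgebra structure on $\fg$ (i.e., $\delta_r$ is a map $\fg \to \fg\otimes_\KK\fg$, not merely $\overline{\fg}\to\overline{\fg}\otimes\overline{\fg}$): first that $\gamma(r) = r$ for all $\gamma \in \mathrm{Gal}(\LL)$, and second that $\gamma_1(r) = -r^{21}$.

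The plan is to exploit the hypothesis that $\delta = \delta_r$ is $\KK$-rational, together with the explicit description of $\mathcal{G} = \mathrm{Gal}(\KK) = \widehat{\Z}$ and its generator $\gamma_1$ (with $\gamma_2 = 2\gamma_1$ generating $\mathrm{Gal}(\LL)$) set up just above. First I would observe that since $\delta: \fg \to \fg\otimes_\KK\fg$ is defined over $\KK$, it is $\mathcal{G}$-equivariant: for every $\gamma\in\mathcal{G}$ and $a\in\fg$ one has $\gamma(\delta(a)) = \delta(\gamma(a))$. Because $\delta_r(a) = [a\otimes 1 + 1\otimes a,\ r]$ and the bracket is $\overline{\KK}$-bilinear and Galois-equivariant, applying $\gamma$ gives $\delta_{\gamma(r)}(\gamma(a)) = \gamma(\delta_r(a)) = \delta_r(\gamma(a))$; ranging $a$ over $\fg$ and hence $\gamma(a)$ over $\fg$, we get $\delta_{\gamma(r)} = \delta_r$ as maps $\fg\to\overline{\fg}\otimes\overline{\fg}$. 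Thus $\gamma(r) - r$ lies in the kernel of the coboundary operator $v\mapsto \delta_v$, i.e. in the centralizer of $\fg$ in $\overline{\fg}\otimes\overline{\fg}$ under the diagonal adjoint action; since $\fg$ is simple this centralizer is one-dimensional, spanned by the Casimir $\Omega$ (this is the standard Whitehead-type computation already invoked in \S3). Hence $\gamma(r) = r + \lambda(\gamma)\,\Omega$ for a scalar $\lambda(\gamma)\in\overline{\KK}$, and in fact $\lambda(\gamma)\in\KK$ is fixed by... no — rather, applying $\gamma$ to $r + r^{21} = j\Omega$ gives $\gamma(r) + \gamma(r)^{21} = \gamma(j)\Omega$, so $2j\Omega + 2\lambda(\gamma)\Omega = \gamma(j)\Omega$, whence $\lambda(\gamma) = \tfrac{1}{2}(\gamma(j) - 2j)$. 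When $\gamma\in\mathrm{Gal}(\LL)$ we have $\gamma(j) = j$ (as $j\in\LL$), so $\lambda(\gamma) = -\tfrac{1}{2}j \neq 0$ — this is wrong, so the cocycle relation must enter more carefully.

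The correction, which is where the real work lies: the map $\gamma\mapsto \lambda(\gamma)$ is not arbitrary but constrained by the cocycle/compatibility condition $\gamma\delta(\eta(r)) $... concretely, from $\gamma(r) = r + \lambda(\gamma)\Omega$ and $\eta(r) = r + \lambda(\eta)\Omega$ with $\eta(\Omega) = \Omega$ (the Casimir of the split form is $\KK$-rational) we get $(\gamma\eta)(r) = r + (\lambda(\gamma) + \lambda(\eta))\Omega$, so $\lambda: \mathcal{G}\to(\overline{\KK},+)$ is an additive homomorphism, continuous, hence factors through a finite quotient; but $\overline{\KK}$ is torsion-free, so $\lambda \equiv 0$. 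Therefore $\gamma(r) = r$ for every $\gamma\in\mathcal{G}$, which in particular gives the first bullet — but this also would force $r\in\fg\otimes_\KK\fg$ and then $r + r^{21} = j\Omega$ is impossible since $j\notin\KK$. So the resolution must be that $\delta_r$ takes values in $\overline{\fg}\otimes\overline{\fg}$ with a twisted rationality: the hypothesis "defines a Lie bialgebra structure on $\fg$" in the twisted setting means $\delta$ is rational \emph{after} twisting by the cocycle realizing $\LL/\KK$, equivalently $\delta(\fg)\subseteq\fg\otimes_\KK\fg$ holds only for $\gamma\in\mathrm{Gal}(\LL)$ while $\gamma_1$ acts through the nontrivial element of $\mathrm{Gal}(\LL/\KK)$ composed with the flip $r\mapsto r^{21}$. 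The main obstacle is pinning down precisely this twisted-rationality hypothesis from the preceding Proposition (conditions $(ii\,a)$ and $(ii\,b)$): one shows $\gamma(r) = r$ for $\gamma\in\mathrm{Gal}(\LL)$ directly from $(ii\,a)$ and $\gamma(r_{\rm BD}) = r_{\rm BD}$, $\gamma(j) = j$, via $r = j\,\mathrm{Ad}_X(r_{\rm BD})$; and $\gamma_1(r) = \gamma_1(j)\,\mathrm{Ad}_{\gamma_1(X)}(\gamma_1(r_{\rm BD})) = -j\,\mathrm{Ad}_{\gamma_1(X)}(r_{\rm BD}) = -j\,\mathrm{Ad}_X\mathrm{Ad}_{X^{-1}\gamma_1(X)}(r_{\rm BD}) = -j\,\mathrm{Ad}_X(r_{\rm BD}^{21}) = -(j\,\mathrm{Ad}_X(r_{\rm BD}))^{21} = -r^{21}$, using $\gamma_1(j) = \xi_2 j = -j$, $\gamma_1(r_{\rm BD}) = r_{\rm BD}$ (rationality of $r_{\rm BD}$), condition $(ii\,b)$, and the fact that $\mathrm{Ad}_X$ commutes with the flip.

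Finally I would remark that the general Proposition is stated for $r$ coming from an a priori Lie bialgebra structure without reference to $r_{\rm BD}$; to cover that case one first invokes the Belavin--Drinfeld classification over $\overline{\KK}$ to write $\overline{r} = c\,\mathrm{Ad}_X(r_{\rm BD})$ with $c^2\in\KK$ (Theorem~2.7 of \cite{KKPS3}), notes $c = aj$ can be normalized to $c = j$, and then the rationality of $\delta_r$ over $\KK$ forces exactly conditions $(ii\,a)$ and $(ii\,b)$ as in the previous Proposition — after which the computation above applies verbatim. I expect the bookkeeping with the flip $(\cdot)^{21}$ and the sign $\xi_2 = -1$, together with carefully extracting the twisted-rationality input, to be the only delicate points; everything else is formal.
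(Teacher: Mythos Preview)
Your first approach is the right one and is essentially the paper's own argument; you simply made an arithmetic slip and then abandoned it. From $\gamma(r)=r+\lambda(\gamma)\Omega$ and $r+r^{21}=j\Omega$ one gets
\[
\gamma(r)+\gamma(r)^{21}=(r+r^{21})+2\lambda(\gamma)\Omega=j\Omega+2\lambda(\gamma)\Omega=\gamma(j)\Omega,
\]
so $\lambda(\gamma)=\tfrac12(\gamma(j)-j)$, not $\tfrac12(\gamma(j)-2j)$. With the correct formula, $\gamma\in\mathrm{Gal}(\LL)$ gives $\gamma(j)=j$ hence $\lambda(\gamma)=0$ and $\gamma(r)=r$; while $\gamma_1(j)=-j$ gives $\lambda(\gamma_1)=-j$, hence $\gamma_1(r)=r-j\Omega=-r^{21}$. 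This is exactly the content of the paper's proof: the dichotomy $\gamma(r)\in\{r,\,r-j\Omega\}$ is quoted from \cite{KKPS3} (and is precisely your centralizer observation together with the constraint coming from $r+r^{21}=j\Omega$), and the identification of the index-$2$ subgroup with $\mathrm{Gal}(\LL)$ follows.

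Two further remarks. First, your ``cocycle'' digression mis-states the relation: from $(\gamma\eta)(r)=\gamma(r)+\gamma(\lambda(\eta))\Omega$ one gets $\lambda(\gamma\eta)=\lambda(\gamma)+\gamma(\lambda(\eta))$, a $1$-cocycle rather than a homomorphism; your torsion argument does not apply. Second, the fallback route through conditions $(ii\,a)$ and $(ii\,b)$ is circular here: the preceding Proposition is stated only for $\mathfrak{sl}_n$, and in the paper the general conditions $(ii\,a)$, $(ii\,b)$ are \emph{derived from} the present Proposition (see the paragraph immediately following its proof), not conversely. Any independent derivation of $(ii\,a)$, $(ii\,b)$ from the rationality of $\delta_r$ would have to pass through the same centralizer computation you already set up. So fix the factor of $2$ and keep your first argument.
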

\begin{proof}
Let $\gamma\in \Gal.$ It was proved in \cite{KKPS3} that 
\begin{equation}\label{gamma1}
  \gamma (r)=r, \,\, \text{\rm or}
  \end{equation} 
  \begin{equation}\label{gamma2} 
  \gamma (r)=r-j\Omega.
  \end{equation}
Let $G \subset \mathcal{G}$ be the subgroup of elements satisfying (\ref{gamma1}). Clearly, $G$ is a proper subgroup because  $r+r^{21} =j\Omega.$

Let $\gamma$ and $\gamma'$ satisfy (\ref{gamma2}). Then $\gamma \gamma' \in G$. It follows that $G$ is a subgroup of index $2$ and in fact $G= {\rm Gal}(\mathbb{L})$.
For $\gamma_1$ we conclude that $\gamma_1 (r)=r-j\Omega =-r^{21}$.\qed
\end{proof}

\begin{remark}
{\rm It is easy to see that if $r$ satisfies the conclusions of the proposition above, then $r$ induces a Lie bialgebra
structure on $\fg$.}
\end{remark}

Since $r+r^{21} =j\Omega$, it is clear that $r=j \Ad_X (r_{\rm BD})$ for some  $X \in \bG(\overline{\KK})$. Assume that $r_{\rm BD}$ is {\it rational}, i.e. it satisfies
$\gamma (r_{\rm BD})=r_{\rm BD}$ for all $\gamma\in \mathcal{G}$. Then we get the following two equations for $X$:

\begin{itemize}
\item $  \,\, X^{-1}\gamma(X)\in \bC(\bG, r)(\overline{\KK})\,\, \text{for any} \,\, \gamma\in \text{\rm Gal}(\mathbb{L})$

\item $ \,\Ad_{X^{-1}\gamma_1 (X)}(r_{\rm BD})=r_{\rm BD}^{21}.$
\end{itemize}
\begin{definition}
An element $X \in \bG(\overline{\KK})$ is called a twisted Belavin--Drinfeld 
cocycle for  $\bG$ and $r_{\rm BD}$ if $X^{-1}\gamma (X) \in \bC(\bG, r_{\rm BD})$ for any $\gamma\in Gal(\overline{\mathbb{K}}/\mathbb{L})$
and $\Ad_{X^{-1}\gamma_1(X)}(r_{\rm BD}) = r_{\rm BD}^{21}$. 
\end{definition}

\begin{definition}
Two twisted Belavin--Drinfeld cocycles $X$ and $Y$ are said to be equivalent  if $Y=QXC$ for some $C \in \bC(\bG, r_{\rm BD})(\overline{\KK})$and $Q\in \bG(\mathbb{K})$.
\end{definition}
It is clear that the above defines an equivalence relation on the set  $\overline{Z}^1_{BD} (\bG , r_{\rm BD})$ of twisted Belavin--Drinfeld cocycles. \begin{definition}
The twisted Belavin--Drinfeld cohomology related to $\bG$ and $r_{\rm BD}$ is the set of  equivalence classes
of the twisted cocycles. We will denote it by $\overline{H}^1_{BD} (\bG, r_{\rm BD})$.
\end{definition}

Note that it is not clear that twisted Belavin--Drinfeld cocycles exists.

\begin{remark}
{\rm Assume that $r_{\rm BD}$ is rational. Then  the twisted Belavin--Drinfeld cohomology  $\overline{H}^1_{BD} (\bG, r_{\rm BD})$
gives a one-to-one correspondence between equivalence of  Lie bialgebra structures on $\fg$
such that over $\overline{\mathbb{K}}$ they become gauge equivalent to the Lie bialgebra structure defined by $jr_{\rm BD}.$}
\end{remark}

\subsection{Twisted cohomology for the Drinfeld--Jimbo $r$-matrix}

The only good understanding of twisted Belavin--Drinfeld cohomologies is for the Drinfeld--Jimbo $r$-matrix   $r_{\rm DJ}$, which is clearly rational.
Our main goal is to establish the following.

\begin{theorem}\label{maintwisted}
The set $\overline{H}_{\rm BD}^1 (\bG, r_{\rm DJ})$ consists of one element. 
\end{theorem}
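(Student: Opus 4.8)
First I would pin down the centralizer: since $r_{\mathrm{DJ}}$ is rational and, as observed in the proof of the untwisted Drinfeld--Jimbo theorem, $\bC(\bG, r_{\mathrm{DJ}}) = \bH$ (all elements of the split torus fix $\overline{r_{\mathrm{DJ}}}$, and the centralizer sits inside $\bH$ by Theorem~\ref{CisToral}). So a twisted Belavin--Drinfeld cocycle is an element $X \in \bG(\overline{\KK})$ with $X^{-1}\gamma(X) \in \bH(\overline{\KK})$ for all $\gamma \in \mathrm{Gal}(\LL)$, together with the twisting condition $\Ad_{X^{-1}\gamma_1(X)}(r_{\mathrm{DJ}}) = r_{\mathrm{DJ}}^{21}$. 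The first condition says precisely that the cocycle $\gamma \mapsto X^{-1}\gamma(X)$, restricted to $\mathrm{Gal}(\LL)$, takes values in $\bH$; since $\LL$ has cohomological dimension $1$ (it is $\C((j))$) and $\bH$ is a split torus, Hilbert~90 lets me normalize. Concretely I would first multiply $X$ on the left by a suitable $Q \in \bG(\KK)$ — wait, the equivalence only allows left multiplication by $\bG(\KK)$, not $\bG(\LL)$ — so instead I would argue directly at the level of the cocycle class. The key point: because $X^{-1}\gamma(X) \in \bH(\overline{\KK})$ for $\gamma \in \mathrm{Gal}(\LL)$, the element $X$ gives a $\bG(\overline{\KK})$-point whose associated $\bG$-torsor, restricted along $\mathrm{Gal}(\LL)$, reduces to $\bH$; but over $\LL$ every $\bG$-torsor is trivial (Steinberg), so one checks the restricted class is trivial in $H^1(\LL, \bH)$, hence $X = g \cdot h$ with $g \in \bG(\LL)$ fixed by $\mathrm{Gal}(\LL)$ and $h^{-1}\gamma(h) = X^{-1}\gamma(X)$ for $\gamma \in \mathrm{Gal}(\LL)$, $h \in \bH(\overline{\KK})$. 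Replacing $X$ by $g^{-1}X$ (this is allowed only if $g \in \bG(\KK)$, which it need not be), I get stuck — so the cleaner route is to work with $\gamma_1$ directly.

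The heart of the matter is the element $w := X^{-1}\gamma_1(X)$. From the twisting condition $\Ad_w(r_{\mathrm{DJ}}) = r_{\mathrm{DJ}}^{21}$. Now $r_{\mathrm{DJ}}^{21} = \sum_{\alpha>0} e_{-\alpha}\otimes e_\alpha + \tfrac12\Omega_0$, and one knows from the Belavin--Drinfeld theory (the relevant computation appears in \cite{KKPS1}) that conjugating $r_{\mathrm{DJ}}$ to $r_{\mathrm{DJ}}^{21}$ requires $\Ad_w$ to act as the automorphism corresponding to $-w_0$ composed with the Chevalley involution — i.e. $w$ normalizes $\bH$ and its image in the Weyl group is the longest element $w_0$ twisted appropriately, up to an element of $\bH$. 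So $w = n_0 \cdot h_0$ where $n_0$ is a fixed representative of $w_0$ (times the Chevalley involution) and $h_0 \in \bH(\overline{\KK})$. Next I would exploit the compatibility $\gamma_1^2 \in \mathrm{Gal}(\LL)$, more precisely that $\gamma_2 = 2\gamma_1$ generates $\mathrm{Gal}(\LL)$: the cocycle identity gives $X^{-1}\gamma_1^2(X) = w \cdot \gamma_1(w)$, and since $\gamma_1^2$ lies in $\mathrm{Gal}(\LL)$ this must be in $\bH(\overline{\KK})$. The constraint $w\gamma_1(w) \in \bH$ together with $w = n_0 h_0$ forces $n_0^2 \in \bH$ (automatic, since $w_0^2 = 1$) and pins down $\gamma_1(h_0)$ in terms of $h_0$ — a Galois-descent equation on the torus.

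To finish, I would show every such $X$ is equivalent to a single fixed cocycle $X_0$ (existence of which also has to be checked — the Remark notes cocycles need not exist, so producing $X_0$ is part of the theorem). The strategy: given two twisted cocycles $X, Y$, both have $X^{-1}\gamma_1(X)$ and $Y^{-1}\gamma_1(Y)$ lying in the same coset $n_0\bH(\overline{\KK})$; the ratio $Z := XY^{-1}$ satisfies $\gamma_1(Z) = X (X^{-1}\gamma_1(X)) \gamma_1(X^{-1} \cdot XY^{-1})\cdots$ — more carefully, one computes that $Z^{-1}\gamma(Z) \in \bH$ for all $\gamma \in \mathcal{G}$ (both the $\mathrm{Gal}(\LL)$ part and the $\gamma_1$ part, the latter because the $n_0$-contributions cancel). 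So $Z$ is an \emph{untwisted} Belavin--Drinfeld cocycle for $\bG$ and $r_{\mathrm{DJ}}$, whose cohomology is trivial by the untwisted Drinfeld--Jimbo theorem proved above; hence $Z = Q \cdot h$ with $Q \in \bG(\KK)$, $h \in \bH(\overline{\KK})$, giving $Y = h^{-1}Q^{-1}X$, i.e. $Y = (Q^{-1}) X h$ after absorbing — wait, one needs $Y = Q' X C$ with $Q' \in \bG(\KK)$, $C \in \bH$, which is exactly what this yields up to rearranging $h$. Thus all twisted cocycles are equivalent, and $\overline{H}^1_{\mathrm{BD}}(\bG, r_{\mathrm{DJ}})$ has at most one element; combined with the nonemptiness (exhibit $X_0$ explicitly, e.g. built from $j^{\rho^\vee}$-type elements realizing the $-1$ scaling, much as in the $\mathrm{SO}_{2n}$ construction in the preceding Remark) it has exactly one.

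\medskip

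\textbf{Main obstacle.} The delicate step is the reduction to the untwisted case: verifying that the "difference" $Z = XY^{-1}$ of two twisted cocycles genuinely lands in $\bH(\overline{\KK})$ for \emph{all} of $\mathcal{G}$, not just for $\mathrm{Gal}(\LL)$. This requires knowing the $\gamma_1$-component precisely — that the $w_0$-part $n_0$ is literally the \emph{same} representative for $X$ and $Y$ (not just congruent mod $\bH$ up to a Weyl-group element), which in turn rests on the Belavin--Drinfeld rigidity statement that $\Ad_w(r_{\mathrm{DJ}}) = r_{\mathrm{DJ}}^{21}$ determines the Weyl component of $w$ uniquely. Getting this normalization airtight — and separately, writing down an explicit $X_0$ to prove the set is nonempty rather than possibly empty — is where the real work lies; the rest is Hilbert~90 and bookkeeping with the cocycle identity.
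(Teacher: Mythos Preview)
Your central reduction step has a genuine gap. Writing $a_\gamma = X^{-1}\gamma(X)$ and $b_\gamma = Y^{-1}\gamma(Y)$, a direct computation gives
\[
Z^{-1}\gamma(Z) \;=\; Y\,\bigl(a_\gamma\, b_\gamma^{-1}\bigr)\,Y^{-1}
\]
for $Z = XY^{-1}$. Even when $a_\gamma b_\gamma^{-1} \in \bH(\overline{\KK})$ (which does hold for $\gamma \in \mathrm{Gal}(\LL)$, and also for $\gamma_1$ once you use $a_{\gamma_1}, b_{\gamma_1} \in S\,\bH$ and that $S$ normalizes $\bH$), the conjugation by $Y$ throws you out of $\bH$: there is no reason whatsoever for the twisted cocycle $Y$ to normalize the maximal torus. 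So $Z$ is \emph{not} an untwisted Belavin--Drinfeld cocycle, and the appeal to the untwisted Drinfeld--Jimbo theorem is illegitimate. The alternative choice $Z = Y^{-1}X$ fails for the same reason. This is not a matter of ``getting the normalization airtight''; the reduction-to-untwisted strategy is structurally blocked by that stray conjugation.

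The paper's route avoids this by enlarging the target group rather than differencing cocycles. One first produces a concrete $S \in \bG(\KK)$ with $S^2 = 1$ and $\Ad_S(r_{\rm DJ}) = r_{\rm DJ}^{21}$ (Chevalley involution composed with $-w_0$), then forms the semidirect product $\tilde{\bH} = \bH \rtimes \{1,S\}$. Every twisted cocycle $X$ then satisfies $X^{-1}\gamma(X) \in \tilde{\bH}(\overline{\KK})$ for \emph{all} $\gamma \in \mathcal{G}$, so $\gamma \mapsto X^{-1}\gamma(X)$ is an honest element of $Z^1(\KK,\tilde{\bH})$. The exact sequence $1 \to \bH \to \tilde{\bH} \to \Z/2\Z \to 1$ and Steinberg's theorem (cohomological dimension $1$) give $|H^1(\KK,\tilde{\bH})| = 2$; one checks the map $\overline{H}^1_{BD}(\bG,r_{\rm DJ}) \to H^1(\KK,\tilde{\bH})$ is injective, misses the trivial class, and hits the nontrivial class exactly once.

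For existence the paper does not build an explicit $j^{\rho^\vee}$-type element. Instead it observes that $\gamma_1 \mapsto S$ defines a class in $H^1(\KK,\bG)$, which vanishes by Steinberg; the trivializing element $J$ automatically lies in $\bG(\LL)$ and is the desired twisted cocycle.
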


This result was  established in [KKPS1,3] for the classical Lie algebras. The key to the proof is the existence of
special elements $S\in \bG(\KK) $ and $J \in \bG(\mathbb{L})$ with the property
$$
\Ad_S (r_{DJ}) = r_{DJ}^{21} \,\, \text{\rm and} \,\, J^{-1}\gamma_1(J) = S.
$$
The existence of these elements is established by a laborious case-by-case analysis (realizing the classical algebras/groups as matrices). 
We shall provide a uniform and calculation-free proof of the existence of these elements using Steinberg's theorem (``Serre Conjecture {\rm I}"). We will then relate $\overline{H}_{\rm BD}^1$ to Galois cohomology to establish Theorem \label{maintwisted} for all types.

\subsubsection{Construction of $S$ and $J\in \bG(\mathbb{L})$ such that $\gamma_1(J)=JS$}

Let ${\rm Out}(\fg )$ be the finite group of automorphisms of the Coxeter--Dynkin diagram of our simple Lie algebra $\fg.$ If $\bOut(\fg)$  
is the corresponding constant $\KK$-group we know \cite{SGA3} that we have a split exact sequence of algebraic $\KK$-groups
\begin{equation}\label{split}
1 \to \bG \to \bAut(\fg) \to \bOut(\fg) \to 1
\end{equation}
We fix a section $\bOut(\fg) \to \bAut(\fg)$ that stabilizes $(\bB, \bH).$  This gives a  copy of ${\rm Out} (\fg ) = \bOut(\fg)(\KK)$ inside ${\rm Aut} (\fg ) := \bAut(\fg)(\KK)$ that permutes the fundamental root spaces $\fg^{\alpha_i}$ around, and which stabilizes both of our chosen  Borel and Cartan subalgebras. Of course ${\rm Aut} (\fg )$ is the semi-direct product of $\bG(\KK)$ and ${\rm Out} (\fg )$.

\begin{lemma}
Let $w_0$ be the longest element of the Weyl group $W$ of the pair $(\bB,\bH)$.
Then there exists an element $S\in \bG(\mathbb{K})$ such that $S^2=id$ and
$S(\fg^{\alpha})=\fg^{w_0 (\alpha)}$ for all roots $\alpha\in\Delta$
\end{lemma}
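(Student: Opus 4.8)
The plan is to build $S$ as the image under the fixed section of the longest-element-type automorphism together with a suitable lift of $w_0$ inside $\bG(\KK)$, and to adjust it so that it squares to the identity. First I would recall that the longest element $w_0 \in W$ induces an involutive automorphism $-w_0$ of the root system: it permutes the simple roots and satisfies $w_0(\Delta_+) = \Delta_-$, so that $-w_0$ is a diagram automorphism of $\fg$ (the one which is trivial for types $B,C,D_{2n},E_7,E_8,F_4,G_2$ and is the nontrivial involution for types $A_n$, $D_{2n+1}$, $E_6$). Using the split exact sequence (\ref{split}) and the chosen section $\bOut(\fg) \to \bAut(\fg)$ stabilizing $(\bB,\bH)$, the diagram automorphism $\sigma_0$ corresponding to $-w_0$ lifts to an element of $\bAut(\fg)(\KK)$ that stabilizes $\fb$ and $\fh$ and sends $\fg^{\alpha_i}$ to $\fg^{-w_0(\alpha_i)}$ up to a sign. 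The point is then that the \emph{composite} of $\sigma_0$ with a representative $\dot w_0 \in \bG(\KK)$ of $w_0$ in the normalizer $N_{\bG}(\bH)(\KK)$ sends $\fg^{\alpha}$ to $\fg^{w_0(-w_0(\alpha))} = \fg^{\alpha}$ on the Cartan part but, more to the point, sends $\fg^\alpha$ to $\fg^{w_0(\alpha)}$ after we correctly match the two pieces; since $\sigma_0$ realizes $-w_0$ on roots and $\dot w_0$ realizes $w_0$, and we want the net effect $\alpha \mapsto w_0(\alpha)$, I would instead take $S$ to be the element of $\bG(\KK)$ obtained by multiplying a representative of $w_0$ by the (inner) correction coming from identifying $\sigma_0$ with an inner automorphism times $w_0$; concretely, because $-w_0$ is the diagram automorphism part and $\bG(\KK)$ contains a representative of $w_0$, the element $S := \dot w_0$ already satisfies $\Ad_{\dot w_0}(\fg^\alpha) = \fg^{w_0(\alpha)}$, and the only real work is to choose the representative so that $S^2 = \id$.

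The key steps, in order, would be: (1) produce \emph{some} $S_0 \in N_{\bG}(\bH)(\KK)$ with $\Ad_{S_0}(\fg^\alpha) = \fg^{w_0(\alpha)}$ for all $\alpha$ — this is standard, any Tits-style representative $n_{w_0}$ of $w_0$ in the normalizer does the job, and it is defined over $\KK$ because $\bG$ and $\bH$ are split over $\KK$; (2) observe that $S_0^2$ lies in $\bH(\KK)$ since $w_0^2 = 1$; (3) correct $S_0$ by an element $h \in \bH(\KK)$, i.e.\ replace $S_0$ by $hS_0$, so that $(hS_0)^2 = \id$ — this requires solving $h \cdot {}^{S_0}h \cdot S_0^2 = 1$, equivalently $h \cdot \Ad_{S_0}(h) = S_0^{-2}$, for $h \in \bH(\KK)$, where $\Ad_{S_0}$ acts on $\bH$ through $w_0$; (4) conclude that this $S = hS_0$ has all the required properties, noting that conjugation by $h \in \bH$ does not change the root-space permutation, so $\Ad_S(\fg^\alpha) = \fg^{w_0(\alpha)}$ still holds, and $S \in \bG(\KK)$. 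For step (3) I would use that $-w_0$ acts as an involution on the cocharacter lattice of $\bH$, so $\bH \cong \bT_+ \times \bT_-$ as a $\langle w_0\rangle$-module up to isogeny, reduce the equation $h\cdot\Ad_{S_0}(h) = S_0^{-2}$ to separate equations on the $+1$ and $-1$ eigenparts, and solve them over the split torus $\bH(\KK) \cong (\KK^\times)^n$ — on the $-1$ part the map $h \mapsto h\cdot\Ad_{S_0}(h)$ is identically trivial, so the $-1$ component of $S_0^{-2}$ must already vanish (which one checks using that $S_0$ normalizes $\bH$ and the structure constants can be chosen $\KK$-rationally), while on the $+1$ part the equation becomes $h^2 = (\text{fixed element})$, solvable in the split torus after possibly first absorbing the obstruction into a fixed sign using that $2$-torsion is available.

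The main obstacle I anticipate is precisely step (3): ensuring that $S_0^2 \in \bH(\KK)$ can be written as $h \cdot \Ad_{S_0}(h)$ with $h \in \bH(\KK)$, i.e.\ that there is no genuine $2$-cohomological obstruction. For the \emph{adjoint} group this is cleaner (the relevant torus is $\bH_{\rm ad}$ and $S_0^2$ is typically already the identity or a $2$-torsion element killed by the construction), and indeed one expects $S^2 = \id$ to be arrangeable exactly because $w_0$ is an involution and we are over a characteristic-$0$ field where $\bH$ is split; but for the $\bSL_n$ / $\bGL_n$ / $\mathbf{SO}_{2n}$ cases flagged in the footnotes there may be a sign issue in $S_0^2 \in \bH$ that obstructs solving $h\cdot\Ad_{S_0}(h) = S_0^{-2}$ exactly, and one would need to either pass to $\bG_{\rm ad}$ (harmless here, since the lemma only asserts $S \in \bG(\KK)$ with a prescribed action on root spaces, and the statement is about the given $\bG$) or invoke the Tits group relations to pin down $S_0^2$ as a specific product of $\alpha^\vee(-1)$'s and check directly that it is a norm for the $w_0$-action. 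Everything else — the existence of a $\KK$-rational representative of $w_0$, the invariance of the root-space permutation under $\bH$-conjugation, and the fact that $S$ lies in $\bG(\KK)$ and not merely $\bAut(\fg)(\KK)$ — is routine given that $(\bB,\bH)$ is a split Killing couple.
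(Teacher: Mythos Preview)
Your approach --- lift $w_0$ to $S_0 = \dot w_0 \in N_{\bG}(\bH)(\KK)$ and then correct by a torus element to force the square to be trivial --- is natural but genuinely different from the paper's, and the obstacle you flag in step~(3) is real and not resolved by what you wrote. You correctly observe that $S_0^2$ is $w_0$-fixed, so the equation on the $-1$-part is vacuous; but on the $+1$-part you are left with $h^2 = S_0^{-2}$, and square roots in a split torus over $\KK$ are not automatic. Over $\KK = k((t))$ with $k$ algebraically closed one can rescue this by taking the Tits lift, so that $S_0^2$ is a product of $\alpha^\vee(-1)$'s and hence $2$-torsion with entries in $k^\times$, where square roots exist; but you only gesture at this (``invoke the Tits group relations \ldots and check directly''), and it is exactly the computation you were hoping to avoid. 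Your first paragraph, incidentally, oscillates between using the section of $-w_0$ and using $\dot w_0$ alone, and never quite settles; the version you eventually commit to is just $S_0 = \dot w_0$.

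The paper sidesteps all of this by working in $\Aut(\fg)$ rather than in $N_{\bG}(\bH)$. It takes the Chevalley involution $c$ (sending $\fg^\alpha$ to $\fg^{-\alpha}$ and acting as $-1$ on $\fh$) and the diagram automorphism $d$ corresponding to $-w_0$ via the fixed section; both are involutions and they commute, so $S := cd$ satisfies $S^2 = 1$ \emph{automatically}, with no torus correction needed. On root spaces $S$ sends $\fg^\alpha$ to $\fg^{w_0(\alpha)}$, and since $\bG$ is adjoint in this section, $S \in \bG(\KK)$ is equivalent to $S$ being inner, which holds because its restriction to $\fh$ is $(-1)\cdot(-w_0) = w_0 \in W$. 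This gives the involution for free over any field of characteristic~$0$, with no cohomological obstruction to analyse.
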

\begin{proof} Let $c\in {\rm Aut}(\fg )$ be the Chevalley involution. Thus $c^2=id,\ c(\fg^{\alpha})=\fg^{-\alpha}$
and $c$ restricted to the Cartan subalgebra $\fh$ is scalar multiplication by $-1$.
If ${\rm Out} (\fg)$ is trivial, then $w_0({\alpha})=-\alpha$ and we take $S = c$. 

In general note that $-w_0\in {\rm Out} (\fg)$, so we can view this as an element $d\in {\rm Aut}(\fg)$ of order 2.
Clearly, $cd=dc$ and we set $S=cd$, which is of order 2.

It remains to be shown that $S\in \bG(\mathbb{K})$. Since both $c$ and $d$ stabilize $\fh$,
so does $S$. From this it follows that $S(\fg^\alpha)=\fg^{\theta (\alpha)}$ for some $\theta \in {\rm Aut}(\Delta)$ (the automorphism group of our root system).
It is well-known that ${\rm Aut}(\Delta)$ is a semi-direct product of $W$ and ${\rm Out} (\fg )$.
Moreover, $S\in \bG(\KK)$ if and only if the restriction of $S$ to $\fh$ is in $W$. But by construction, $\theta=w_0\in W$.\qed
\end{proof}

\medskip
It is clear from (\ref{rDJdefinition}) that $Ad_S(r_{DJ})=r_{DJ}^{21}$.  Since $\bC(\bG, r_{\rm DJ}) = \bH$
we can redefine twisted Belavin--Drinfeld cocycles  for $r_{\rm DJ}$ as follows. 

\begin{lemma}\label{RDJcocycleredefinition}
An element $X \in \bG(\overline{\KK})$ is a twisted Belavin--Drinfeld cocycle for  $\bG$ and $r_{\rm DJ}$ if and only if 

(i) $X^{-1}\gamma (X) \in \bH(\overline{\KK})$ for any $\gamma \in {\rm Gal}(\mathbb{L})$, and 

(ii) $\Ad_{X^{-1}\gamma_1 (X)}(r_{\rm BD}) = \Ad_S(r_{\rm BD})$. 
\end{lemma}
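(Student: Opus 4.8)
The plan is to observe that Lemma~\ref{RDJcocycleredefinition} is simply the definition of a twisted Belavin--Drinfeld cocycle for $r_{\rm DJ}$ rewritten by inserting two facts already established in the text: the equality $\bC(\bG, r_{\rm DJ}) = \bH$, and the identity $\Ad_S(r_{\rm DJ}) = r_{\rm DJ}^{21}$. So the whole argument is an unwinding of definitions.

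First I would dispose of clause~(i). The definition of a twisted cocycle requires $X^{-1}\gamma(X) \in \bC(\bG, r_{\rm DJ})(\overline{\KK})$ for all $\gamma \in \mathrm{Gal}(\LL)$. But $\bC(\bG, r_{\rm DJ}) = \bH$: it is a closed subgroup of $\bH$ by Theorem~\ref{CisToral}, and conversely every element of $\bH(\overline{\KK})$ fixes $\overline{r_{\rm DJ}}$ because, in the expression of Definition~\ref{rDJdefinition}, the summand $\sum_{\alpha>0} e_\alpha \otimes e_{-\alpha}$ has weight $0$ under $\bH$ and $\Omega_0 \in \fh \otimes_\KK \fh$ is $\bH$-fixed — this is exactly the observation already used in proving the triviality of $H^1_{BD}(\bG, r_{\rm DJ})$. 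Hence the first defining clause is verbatim clause~(i). Next I would treat clause~(ii), using the element $S \in \bG(\KK)$ of the preceding lemma, which satisfies $S(\fg^\alpha) = \fg^{w_0(\alpha)}$ for all $\alpha$ and restricts to $w_0$ on $\fh$. Since $w_0$ carries $\Delta_+$ onto $-\Delta_+$, the part $\sum_{\alpha>0} e_\alpha\otimes e_{-\alpha}$ is sent by $\Ad_S$ to $\sum_{\alpha>0} e_{-\alpha}\otimes e_\alpha$ (with the standard Chevalley normalization of the $e_{\pm\alpha}$), while $\tfrac12\Omega_0$ is fixed because $\Omega_0$ is $W$-invariant; thus $\Ad_S(r_{\rm DJ}) = r_{\rm DJ}^{21}$, which is the identity recorded immediately before the statement. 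Consequently the second defining clause $\Ad_{X^{-1}\gamma_1(X)}(r_{\rm DJ}) = r_{\rm DJ}^{21}$ is equivalent to $\Ad_{X^{-1}\gamma_1(X)}(r_{\rm DJ}) = \Ad_S(r_{\rm DJ})$, i.e.\ to clause~(ii). Combining the two equivalences proves the lemma.

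This is a bookkeeping statement and carries no real obstacle; its entire content is the substitution $r_{\rm DJ}^{21} = \Ad_S(r_{\rm DJ})$, whose purpose is to trade the abstract transpose $r^{21}$ for the action of a concrete rational group element $S$ — precisely the move that makes the $S$/$J$ machinery of the next subsection, and hence the reduction of $\overline{H}^1_{\rm BD}(\bG, r_{\rm DJ})$ to Galois cohomology, possible. The only point worth double-checking is the normalization of the root vectors so that $\Ad_S$ reproduces $r_{\rm DJ}^{21}$ exactly rather than up to signs; with Chevalley's normalization this holds and is consistent with the normalization implicit in the construction of $S$ in the previous lemma.
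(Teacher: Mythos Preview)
Your proposal is correct and matches the paper's approach exactly: the paper does not give a separate proof, but immediately before the lemma records the two facts $\bC(\bG, r_{\rm DJ}) = \bH$ and $\Ad_S(r_{\rm DJ}) = r_{\rm DJ}^{21}$ and then states the lemma as the resulting redefinition. Your unwinding of these substitutions is precisely what the paper intends, and your added remark about the root-vector normalization is a legitimate caveat that the paper leaves implicit.
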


As we shall see this definition will allow us to compute the corresponding twisted Belavin--Drinfeld cohomology.

\begin{proposition}
Let $S\in \bG(\mathbb{K})$ be as in the previous lemma. Then there exists
$J\in \bG(\mathbb{L})$ such that $\gamma_1 (J)=JS$
\end{proposition}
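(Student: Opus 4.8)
The statement we must prove is: given $S \in \bG(\KK)$ with $S^2 = \id$ as in the previous lemma, there exists $J \in \bG(\LL)$ such that $\gamma_1(J) = JS$. Here $\gamma_1$ is the standard profinite generator of $\mathcal{G} = \Gal(\KK)$, and $\gamma_2 = 2\gamma_1$ generates $\Gal(\LL)$. The natural way to read the condition $\gamma_1(J) = JS$ is as a twisted-cocycle-type equation: set $a_{\gamma_1} := S = J^{-1}\gamma_1(J)$. The obstruction to solving $J^{-1}\gamma_1(J) = S$ for $J \in \bG(\overline{\KK})$ is precisely that the assignment $\gamma_1 \mapsto S$ must extend to an honest continuous $1$-cocycle on $\mathcal{G}$ valued in $\bG(\overline{\KK})$, and then that cocycle must be a coboundary. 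Since $\mathcal{G} = \widehat{\Z}$ is topologically generated by $\gamma_1$, a cocycle $u$ is determined by $u_{\gamma_1}$, subject to the constraint coming from $\gamma_1$ having ``profinite order'': concretely, on the finite quotient $\Gal(\KK_m/\KK) = \Z/m\Z$ the element $\ol{1}$ has order $m$, so one needs the partial product $u_{\gamma_1}\,\gamma_1(u_{\gamma_1})\,\gamma_1^2(u_{\gamma_1})\cdots \gamma_1^{m-1}(u_{\gamma_1})$ to equal $1$ whenever $u_{\gamma_1}$ is $\KK_m$-valued and $\gamma_1$-semilinear of the right order. The point is that $S \in \bG(\KK)$ is Galois-fixed, so this partial product is just $S^m$; since $S^2 = \id$, taking $m = 2$ (i.e. working over $\LL = \KK_2$) makes $S^2 = \id$, so the assignment is consistent over the quadratic extension. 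Thus $\gamma_1 \mapsto S$ defines a genuine continuous cocycle $u \in Z^1(\mathcal{G}, \bG(\overline{\KK}))$ that, by construction, already factors (as a cocycle) through $\Gal(\LL/\KK) = \Z/2\Z$ — more precisely its restriction to $\Gal(\LL)$ is trivial because $\gamma_2 = 2\gamma_1$ and $u_{\gamma_2} = u_{\gamma_1}\gamma_1(u_{\gamma_1}) = S \cdot S = \id$.

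**Carrying it out.** First I would verify the cocycle claim: define $u : \mathcal{G} \to \bG(\overline{\KK})$ by declaring, for $\gamma = \gamma_1^n$ acting through a finite quotient, $u_{\gamma} = S^n$ (well-defined since $S^2 = \id$ and $S$ is $\mathcal{G}$-fixed, so no semilinearity issue arises and $S^n$ only depends on $n \bmod 2$); one checks $u_{\gamma\gamma'} = u_\gamma\,\gamma(u_{\gamma'})$ immediately, again using $\gamma(S) = S$. This $u$ is continuous since it factors through $\Gal(\LL/\KK)$. Next, to produce $J$ I would invoke Steinberg's theorem (``Serre Conjecture I''): $\KK = k((t))$ with $k$ algebraically closed of characteristic $0$ has cohomological dimension $1$, $\bG$ is connected (it is a connected reductive, in fact adjoint, $\KK$-group by the standing assumption of this section), hence $H^1(\KK, \bG) = 1$. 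Therefore the class of $u$ in $H^1(\mathcal{G}, \bG(\overline{\KK}))$ is trivial, i.e. there exists $J \in \bG(\overline{\KK})$ with $u_\gamma = J^{-1}\gamma(J)$ for all $\gamma \in \mathcal{G}$; in particular $S = u_{\gamma_1} = J^{-1}\gamma_1(J)$, which is the desired relation $\gamma_1(J) = JS$. Finally I would check that $J$ may be taken in $\bG(\LL)$: for $\gamma \in \Gal(\LL)$ we have $\gamma = \gamma_1^{2m}$-type elements, and $u_\gamma = S^{2m} = \id$, so $J^{-1}\gamma(J) = \id$, i.e. $\gamma(J) = J$ for all $\gamma \in \Gal(\LL)$; by Galois descent $J \in \bG(\LL)$, as claimed.

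**Main obstacle.** The genuinely delicate point is the very first step — checking that $\gamma_1 \mapsto S$ actually extends to a well-defined continuous cocycle on the profinite group $\mathcal{G} = \widehat{\Z}$, rather than merely on the abstract infinite cyclic group. This is exactly where $S^2 = \id$ (from the preceding lemma) and $S \in \bG(\KK)$ (rather than $\bG(\LL)$ or worse) are both used: the first ensures the ``order'' constraint $S^2 = \id$ is satisfied on the quotient $\Gal(\LL/\KK) = \Z/2\Z$, and the second removes all semilinear twisting from the partial products so that the constraint reads simply $S^2 = \id$ and not something involving $\gamma_1(S)$. Once the cocycle is in hand, the descent of the trivialization is routine and the existence of $J$ is an immediate application of Serre Conjecture I. I would also remark that the same argument shows $J$ can be chosen so that its image in $\bAut(\fg)$ is compatible with the section fixed earlier, should that refinement be needed downstream, but for the bare statement the above suffices.
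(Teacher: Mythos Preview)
Your proposal is correct and follows essentially the same approach as the paper: build a continuous cocycle $u \in Z^1(\KK,\bG)$ with $u_{\gamma_1}=S$ (using $S\in\bG(\KK)$ and $S^2=\id$), kill its class via Steinberg's theorem since $\KK$ has cohomological dimension $1$, and then verify that the resulting $J$ is fixed by $\Gal(\LL)$ because $u_{\gamma_2}=S^2=\id$. The only difference is cosmetic: you spell out the well-definedness of $u$ on $\widehat{\Z}$ more carefully than the paper does, and the paper checks $\gamma_2(J)=J$ by a direct two-line computation rather than invoking $u_{\gamma_2}=\id$, but these are the same argument.
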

\begin{proof}
There exists a unique continuous group homomorphism
$u: \Gal \to G(\overline{\mathbb{K}})$ such that
$u(\gamma_1) = S$. Given that $\gamma_1 (S)=S$ our $u$ is a cocycle in $Z^1 (\mathbb{K}, \bG)$.

Since $\KK$ is of cohomological dimension $1$ by  Steinberg's theorem $H^1 (\mathbb{K}, \bG)=1$.
Therefore, there exists $J\in \bG(\overline{\mathbb{K}})$ such that $J^{-1}\gamma_1 (J)=S$.
It remains to be shown that $J\in \bG(\mathbb{L})$. For this note that
$$
2\gamma_1 (J)=\gamma_1 (\gamma_1 (J))=\gamma_1 (JS)=\gamma_1 (J)S=JS^2=J
$$
Since $2\gamma_1$ pro-generates ${\rm Gal} (\mathbb{L})$ it follows that  $J\in \bG(\mathbb{L})$ as desired.\qed
\end{proof}

\medskip 
Note that our element $J$ is a twisted Belavin--Drinfeld cocycle.
\subsubsection{Computation of $\overline{H}^1_{BD} (\bG , r_{\rm DJ})$}

The aim of this section to show that $\overline{H}^1_{BD} (\bG , r_{\rm DJ})$ consists of one element
generated by the class of the element $J$ constructed above. This will in particular prove Theorem \ref{maintwisted}. 

It is clear that our element $S$ normalizes (in the functorial sense) $\bH$. We can therefore consider the $\KK$- group 
$$ \tilde{\bH} = \bH \rtimes \{1, S \}.$$ 
Strictly speaking we should be writing the constant $\KK$-group corresponding to the finite group $\{1, S\}.$ For this reason we shall also write  
$$ \tilde{\bH} = \bH \rtimes \Bbb{Z}/2\Bbb{Z}$$
where $\Bbb{Z}/2\Bbb{Z}$ acts on $\bH$ by means of $S.$ 

Let us begin by explicitly determining $H^1(\KK, \tilde{\bH}).$  Consider the split exact sequence of $\KK$ groups 
$$1 \to \bH \to \tilde{\bH} \to \Bbb{Z}/2\Bbb{Z} \to 1.$$
Passing to cohomology we get 
$$H^1(\KK, \bH) \to H^1(\KK, \tilde{\bH}) \to H^1(\KK, \Bbb{Z}/2\Bbb{Z}) \to 1.$$
The surjectivity of the last map follows from the fact that the original sequence of $\KK$-groups splits.
We have $H^1(\KK, \Bbb{Z}/2\Bbb{Z}) = \KK^\times/(\KK^\times)^2.$ This last is the group of order $2$ with representatives $\{1, j\}$ where we recall that $ j = t^\frac{1}{2}.$

The elements of $H^1(\KK, \tilde{\bH})$ mapping to the class of $1$ are given by the image of $H^1(\KK, \bH)$ 
which is trivial by Hilbert 90. The elements of $H^1(\KK, \tilde{\bH})$ mapping to the class of $j$ 
are given by the image of $H^1(\KK, \bH')$ where the $\KK$-group $\bH'$ is a twisted form of $\bH.$ 
By Steinberg's theorem $H^1(\KK, \bH')$ vanishes. It follows that $H^1(\KK, \tilde{\bH})$ has two elements. More precisely.

\begin{theorem}
The pointed set $H^1 (\mathbb{K},\bH\rtimes \{1,S\})$ consists of the two elements:
\begin{enumerate}

\item{The trivial class,}

\item{The class of the cocycle $u_J$ defined by $u_J,\ u_J(\gamma)=J^{-1} \gamma (J).$ In particular $u_J(\gamma_1) = S.$}
\end{enumerate}

\end{theorem}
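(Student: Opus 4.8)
The plan is to read off the structure of $H^1(\KK,\tilde{\bH})$ from the split exact sequence $1\to\bH\to\tilde{\bH}\to\Z/2\Z\to 1$ exactly as sketched in the paragraph preceding the statement, and then to identify the non-trivial class concretely with the cocycle $u_J$. First I would record the cohomology exact sequence of pointed sets attached to the split sequence of $\KK$-groups, namely $H^1(\KK,\bH)\to H^1(\KK,\tilde{\bH})\to H^1(\KK,\Z/2\Z)\to 1$, noting that exactness at $H^1(\KK,\tilde{\bH})$ means the fibre over the base point of $H^1(\KK,\Z/2\Z)$ is the image of $H^1(\KK,\bH)$, while surjectivity onto $H^1(\KK,\Z/2\Z)$ comes from the splitting of the group sequence. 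Since $\bH$ is a split torus over $\KK$, Hilbert 90 gives $H^1(\KK,\bH)=1$, so the fibre over the trivial class of $H^1(\KK,\Z/2\Z)$ is a single point, the trivial class of $H^1(\KK,\tilde{\bH})$.

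Next I would compute $H^1(\KK,\Z/2\Z)$. Over $\KK=k((t))$ with $k$ algebraically closed of characteristic $0$ this equals $\KK^\times/(\KK^\times)^2$, which has order $2$ with representatives $\{1,j\}$ where $j=t^{1/2}$; equivalently its non-trivial class is represented by the continuous cocycle sending $\gamma_1$ to the generator of $\Z/2\Z$. So it remains to show the fibre of $H^1(\KK,\tilde{\bH})\to H^1(\KK,\Z/2\Z)$ over this non-trivial class is also a single point. By the standard twisting argument (Serre, \emph{Galois Cohomology}, I.5.5), after twisting $\tilde{\bH}$ by a cocycle lifting the class of $j$, this fibre is in bijection with the image of $H^1(\KK,\bH')$ in $H^1(\KK,{}_c\tilde{\bH})$, where $\bH'$ is the corresponding inner twist of $\bH$ — a (possibly non-split) $\KK$-torus, hence still connected. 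Since $\KK$ has cohomological dimension $1$, Steinberg's theorem (``Serre Conjecture I'', \cite{Se} III.2.3) forces $H^1(\KK,\bH')=1$, so that fibre is a single point too. Therefore $H^1(\KK,\tilde{\bH})$ has exactly two elements.

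It then remains to check the last sentence of the statement: that the non-trivial element is the class of $u_J$ with $u_J(\gamma_1)=S$. For this I would invoke the Proposition producing $J\in\bG(\LL)$ with $\gamma_1(J)=JS$, and set $u_J(\gamma):=J^{-1}\gamma(J)$. One verifies $u_J$ is a continuous $1$-cocycle valued in $\tilde{\bH}(\overline\KK)$: indeed $u_J(\gamma_1)=J^{-1}\gamma_1(J)=S$, and for $\gamma\in\mathrm{Gal}(\LL)$ one has $\gamma(J)=J\cdot(J^{-1}\gamma(J))$ with $J^{-1}\gamma(J)\in\bH(\overline\KK)$ because $J^{-1}\gamma_1(J)=S$ implies (as in the proof of that Proposition) that $J$ differs from a $\KK$-rational element by a $\bG$-coboundary controlled by $S$; more directly, $J\in\bG(\LL)$ and $\mathrm{Gal}(\LL)=\langle\gamma_2\rangle=\langle 2\gamma_1\rangle$ acts trivially on $J$ after accounting for $S^2=\id$, so the $\bH$-part is as claimed. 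The composite $u_J\mapsto$ its image in $H^1(\KK,\Z/2\Z)$ sends $\gamma_1\mapsto S\bmod\bH$, the generator, hence $u_J$ lands in the non-trivial fibre; since that fibre is a single point, $[u_J]$ is exactly the non-trivial class.

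I expect the main obstacle to be the bookkeeping in the twisting step of the second paragraph: one must be careful that twisting a split exact sequence of $\KK$-groups by a class from the quotient yields again a split-type situation with connected kernel $\bH'$, so that Steinberg applies, rather than merely a torsor under $\bH$ over which control is lost. The connectedness of $\bH'$ (it is a torus, being an inner form of a torus) is what makes Steinberg's theorem applicable and is the crux; once that is in place the counting is automatic and the identification of the non-trivial class with $[u_J]$ is a formal consequence of exactness together with the previously constructed element $J$.
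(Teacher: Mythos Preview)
Your proposal is correct and follows essentially the same approach as the paper: the split exact sequence, Hilbert~90 for the trivial fibre, the twisting argument with Steinberg's theorem for the non-trivial fibre, and the identification of the latter with $[u_J]$ are exactly what the paper does in the paragraph preceding the theorem. The only cosmetic remark is that your verification that $u_J$ takes values in $\tilde{\bH}$ is slightly roundabout---since $J\in\bG(\LL)$ one has directly $u_J(\gamma)=1$ for $\gamma\in\mathrm{Gal}(\LL)$, and then $u_J(\gamma_1^n)=S^n\in\{1,S\}$ by the cocycle relation and $\KK$-rationality of $S$.
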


If $X \in \bG(\overline{\KK})$ is a twisted Belavin--Drinfeld cocycle for $r_{\rm DJ}$ it is clear from Lemma \ref{RDJcocycleredefinition} that the map $\tilde{u}_X : {\rm Gal}(\KK) \to \bG(\overline{\KK})$ given by
$$ \tilde{u}_X : \gamma \mapsto X^{-1} \gamma(X)$$
is a Galois cohomology cocycle in $Z^1(\KK, \tilde{\bH}).$ 

\begin{theorem}
The map $X \mapsto \tilde{u}_X$ described above induces an injection
$\overline{H}^1_{BD} (\bG, r_{\rm DJ})\to H^1 (\KK, \tilde{\bH}) = \{1,j\}.$ More precisely the fiber of the trivial class $1$ is empty and that of $j$ consist of the class of the Belavin--Drinfeld cocycle $J.$
\end{theorem}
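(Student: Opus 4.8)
The plan is to analyze the map $X \mapsto \tilde u_X$ on twisted Belavin--Drinfeld cocycles through the description of $H^1(\KK,\tilde\bH)$ just obtained, handling the two fibers separately. First I would check that the map descends to cohomology classes: if $Y = QXC$ with $Q \in \bG(\KK)$ and $C \in \bH(\overline\KK)$, then since $\gamma(Q) = Q$ one computes $\tilde u_Y(\gamma) = C^{-1}\tilde u_X(\gamma)\gamma(C)$, so $\tilde u_Y$ and $\tilde u_X$ are cohomologous in $Z^1(\KK,\tilde\bH)$; this is verbatim the argument in the proof of Proposition \ref{mainuntwistedprop}. Thus we have a well-defined map $\overline H^1_{BD}(\bG,r_{\rm DJ}) \to H^1(\KK,\tilde\bH) = \{1,j\}$, and by the previous theorem sending $J$ to $u_J$ we see $J$ lands in the fiber over $j$ (since $\tilde u_J(\gamma_1) = J^{-1}\gamma_1(J) = S$, which is exactly $u_J(\gamma_1)$).

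Next I would show the fiber over the trivial class is empty. Suppose $X$ is a twisted Belavin--Drinfeld cocycle with $\tilde u_X$ cohomologically trivial in $Z^1(\KK,\tilde\bH)$; then there is $P \in \tilde\bH(\overline\KK)$ with $X^{-1}\gamma(X) = P^{-1}\gamma(P)$ for all $\gamma$, so $XP^{-1} \in \bG(\KK)$ modulo the $S$-component -- more carefully, composing the image of $\tilde u_X$ in $H^1(\KK,\Bbb Z/2\Bbb Z)$ must be trivial, meaning the component of $X^{-1}\gamma_1(X)$ in $\Bbb Z/2\Bbb Z$ is trivial, i.e.\ $X^{-1}\gamma_1(X) \in \bH(\overline\KK)$. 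But condition (ii) of Lemma \ref{RDJcocycleredefinition} requires $\Ad_{X^{-1}\gamma_1(X)}(r_{\rm BD}) = \Ad_S(r_{\rm BD}) = r_{\rm BD}^{21} \neq r_{\rm BD}$ (here I use that $r_{\rm DJ}$ is not skew-symmetric, so $r_{\rm DJ}^{21} \neq r_{\rm DJ}$), while any element of $\bH(\overline\KK) = \bC(\bG,r_{\rm DJ})(\overline\KK)$ fixes $r_{\rm DJ}$. This contradiction shows no twisted cocycle maps to the trivial class, so $\overline H^1_{BD}(\bG,r_{\rm DJ})$ maps into $\{j\}$ and in particular has at most the structure coming from the fiber over $j$.

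Finally I would prove injectivity, i.e.\ that any twisted cocycle $X$ is equivalent to $J$. Given such $X$, both $\tilde u_X$ and $u_J$ lie over $j$ in $H^1(\KK,\tilde\bH)$, and by the previous theorem $H^1(\KK,\tilde\bH)$ has exactly the two listed elements, so $\tilde u_X$ is cohomologous to $u_J$ in $Z^1(\KK,\tilde\bH)$: there is $C \in \tilde\bH(\overline\KK)$ with $X^{-1}\gamma(X) = C^{-1}\,J^{-1}\gamma(J)\,\gamma(C)$ for all $\gamma$. Rearranging, $Q^{-1} := JCX^{-1}$ satisfies $\gamma(Q^{-1}) = Q^{-1}$, hence $Q \in \bG(\KK)$ (one checks $Q$ does in fact land in $\bG$, not just $\bAut(\fg)$, using that $J, X \in \bG(\overline\KK)$ and adjusting the $\Bbb Z/2\Bbb Z$-component of $C$ -- the $S$-components must cancel). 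Writing $C = HC_0$ with $H \in \bH(\overline\KK) = \bC(\bG,r_{\rm DJ})(\overline\KK)$ and $C_0 \in \{1,S\}$, and absorbing $C_0$ appropriately, one gets $X = Q\,J\,H$ with $Q \in \bG(\KK)$ and $H \in \bC(\bG,r_{\rm DJ})(\overline\KK)$, which is precisely the equivalence $X \sim J$. The main obstacle I anticipate is exactly this last bookkeeping: tracking the $\Bbb Z/2\Bbb Z$-component through the cohomology equivalence to be sure that the conjugating element $Q$ lands in $\bG(\KK)$ rather than merely in $\tilde\bH(\KK)$ or $\bAut(\fg)(\KK)$, and that what remains is genuinely an element of the centralizer $\bC(\bG,r_{\rm DJ}) = \bH$ -- but this is forced since $X^{-1}\gamma_1(X)$ and $J^{-1}\gamma_1(J)$ both have $S$ as their $\Bbb Z/2\Bbb Z$-component (by condition (ii) of Lemma \ref{RDJcocycleredefinition}), so these components cancel and the ambiguity is only by $\bH$. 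Combining the three steps gives that $\overline H^1_{BD}(\bG,r_{\rm DJ})$ is a single class, represented by $J$, proving both this theorem and Theorem \ref{maintwisted}.
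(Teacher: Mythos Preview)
Your proposal is correct and follows essentially the same route as the paper's proof. For the bookkeeping issue you flag in the last step, the paper resolves it cleanly by writing the conjugating element as $h = S^{\epsilon}C$ with $C \in \bH(\overline{\KK})$ and noting that, since $S \in \bG(\KK)$ and $u_J(\gamma_n) = S^n$ commutes with $S^{\epsilon}$, the factor $S^{\epsilon}$ drops out of $h^{-1}u_J(\gamma)\gamma(h)$, leaving $X^{-1}\gamma(X) = C^{-1}\bigl(J^{-1}\gamma(J)\bigr)\gamma(C)$; then $Q^{-1} := JCX^{-1} \in \bG(\overline{\KK})$ is Galois invariant, so $Q \in \bG(\KK)$ and $X = QJC$ with $C \in \bH(\overline{\KK}) = \bC(\bG,r_{\rm DJ})(\overline{\KK})$.
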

\begin{proof}
If  $X$ and $Y$ are equivalent Belavin--Drinfeld cocycle for $r_{\rm DJ}$ then by definition  $Y=QXC$ where $Q\in G(\mathbb{K})$ and $C \in \bH(\overline{\KK}).$ Just as in the untwisted case we see that the Galois cocycles $\tilde{u}_X $ and $\tilde{u}_Y $ are cohomologous. We thus have a canonical map
$$i:\overline{H}^1_{BD} (\bG, r_{\rm DJ})\to H^1  (\mathbb{K},\bH \rtimes \{1,S\})$$
We now look in detail at the two fibers. Let $X \in \bG(\overline{\KK})$ be a twisted Belavin--Drinfeld cocycle.

\begin{enumerate}
\item{ Suppose that $\tilde{u}_X$ is in the trivial class $1\in \{1,j\}$. By definition there exists an element $h \in \tilde{\bH}(\overline{\KK})$ such that  $\tilde{u}_X(\gamma) = h^{-1}{^\gamma}h.$ 
Let $C \in \bH(\overline{\KK})$ and  $\epsilon \in \{0,1\}$ be such that $h =S{^\epsilon}C.$ Since $S$ is fixed by the Galois group $h^{-1}{^\gamma}h = C^{-1}{^\gamma}C.$ But this implies, in particular, that $\tilde{u}_X(\gamma_1) \in \bH(\overline{\KK})$. This last is false since
$$\tilde{u}_X(\gamma_1)(r_{\rm DJ}) = X^{-1}\gamma_1(X)(r_{\rm DJ}) = r_{\rm DJ}^{21}\neq  r_{\rm DJ}.$$
The fiber of the trivial class $1$ under our canonical map is therefore empty.}

\item{Suppose that the class of $X$ is  mapped to $j\in \{1,j\}$. Then $\tilde{u}_X$ is cohomologous to $u_J$. By definition there exist $h =S{^\epsilon}C$ as above such that
\begin{equation}\label{twistedcocycle}
X^{-1}\gamma (X)=C^{-1}S^{\epsilon}J^{-1}\gamma(J)S^{\epsilon}\gamma (C)
\end{equation}

for all $\gamma \in \mathcal{G}.$ An arbitrary element of our Galois group is of the form $\gamma_n = n \gamma_1$ Recall that $J \in \bG(\mathbb{L})$ (hence it is fixed by all $\gamma_n$ with $n$ even),  that $J^{-1}\gamma(J) = S \in \bG(\KK)$ and that $S^2 = 1.$ These easily imply that   $J^{-1}\gamma_n(J) = S^n.$  Taking this into account we get from (\ref{twistedcocycle}) that for all $n \in \mathbb{Z}$
 \begin{equation}\label{tc} X^{-1}\gamma_n (X)=C^{-1}J^{-1}\gamma_n(J)\gamma_n (C) \,\, \text{\rm if $n$ is odd}
\end{equation}

From these it readily follows that $Q^{-1} := JCX^{-1}$ is invariant under the action of $\mathcal{G}.$ 
Thus $Q \in \bG(\KK).$ Since $X = QJC$  we have that $X$ and $J$ are equivalent Belavin--Drinfeld cocycles. 
The fiber of $j$ has therefore exactly one element.}
\end{enumerate} 

This completes the proof. \qed
\end{proof}
\medskip

This last result shows that Theorem \ref{maintwisted} holds.  More precisely. \begin{corollary}
The twisted Belavin--Drinfeld cohomology $\overline{H}^1_{BD} (\bG, r_{\rm DJ})$ consists of one class
only, namely the class of the cocycle $J$. \qed
\end{corollary}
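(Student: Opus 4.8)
The plan is to read off the Corollary directly from the theorem immediately preceding it, which supplies the injection of pointed sets
$$
i \colon \overline{H}^1_{BD}(\bG, r_{\rm DJ}) \to H^1(\KK, \tilde{\bH}) = \{1, j\}
$$
together with a complete description of both of its fibers: the fiber over the trivial class $1$ is empty, and the fiber over $j$ consists of exactly one element, the equivalence class of the cocycle $J$. First I would note that $\overline{H}^1_{BD}(\bG, r_{\rm DJ})$ is nonempty, since the element $J \in \bG(\LL)$ built in the previous subsection was observed to be a twisted Belavin--Drinfeld cocycle for $r_{\rm DJ}$; thus its class is a genuine point of $\overline{H}^1_{BD}(\bG, r_{\rm DJ})$, and this point maps under $i$ into the fiber over $j$.

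Next, because $i$ is injective, the cardinality of $\overline{H}^1_{BD}(\bG, r_{\rm DJ})$ equals the cardinality of the image of $i$. Emptiness of the fiber over $1$ forces the image into $\{j\}$, while the fiber over $j$ being a singleton containing the class of $J$ shows the image is exactly $\{j\}$ and that this class is the unique preimage. Hence $\overline{H}^1_{BD}(\bG, r_{\rm DJ})$ is a one-point set, namely $\{[J]\}$, which is precisely the assertion of the Corollary (and, equivalently, Theorem~\ref{maintwisted}). There is no real obstacle here: everything needed has been assembled above, and the only point requiring a moment's care is the bookkeeping that nonemptiness of the source set is what upgrades ``injective into a set whose $j$-fiber is a singleton'' to ``the source is a singleton'', rather than leaving open the possibility that it is empty — and this is exactly what the explicit cocycle $J$ guarantees.
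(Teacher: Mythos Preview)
Your proposal is correct and matches the paper's approach exactly: the corollary is stated with an immediate \qed because it is read off directly from the preceding theorem, which already says the fiber over $1$ is empty and the fiber over $j$ is the single class $[J]$. Your extra remark about nonemptiness is a reasonable sanity check but is already contained in the statement that the $j$-fiber \emph{consists of} the class of $J$.
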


\end{document}